\newcommand{\curly}[1]{\mathscr{#1}}
\newcommand\A{\mathcal A}
\newcommand\I{\curly I}
\let\O\@undefined
\newcommand\O{\mathcal O}
\newcommand\PP{\mathbb P}
\let\P\@undefined
\newcommand\P{\mathbb P}
\newcommand\C{\mathbb C}
\newcommand\Q{\mathbb Q}
\newcommand{\rt}[1]{\stackrel{#1\,}{\rightarrow}}
\newcommand{\Rt}[1]{\stackrel{#1\,}{\longrightarrow}}
\newcommand{\RT}[2]{\xymatrix@C=#1pt{\ar[r]^{#2}&}}
\newcommand\To{\longrightarrow}
\newcommand\into{\hookrightarrow}
\newcommand\ito{\ar@{^{ (}->}[r]}
\newcommand{\Into}{\ensuremath{\lhook\joinrel\relbar\joinrel\rightarrow}}
\newcommand{\onto}{\twoheadrightarrow}
\renewcommand\_{^{}_}
\newfont{\bigtimesfont}{cmsy10 scaled \magstep5}
\newcommand{\bigtimes}{\mathop{\lower0.9ex\hbox{\bigtimesfont\symbol2}}}
\newcommand\Langle{\big\langle}
\newcommand\Rangle{\big\rangle}
\newcommand\Bl{\operatorname{Bl}}
\newcommand\Hom{\operatorname{Hom}}
\newcommand\beq[1]{\begin{equation}\label{#1}}
\newcommand\eeq{\end{equation}}
\newcommand\beqa{\begin{eqnarray*}}
\newcommand\eeqa{\end{eqnarray*}}
\makeatletter \@addtoreset{equation}{section} \makeatother
\newtheorem{thm}[equation]{Theorem}
\newtheorem*{thm*}{Theorem}
\newtheorem{lem}[equation]{Lemma}
\newtheorem{prop}[equation]{Proposition}
\newenvironment{rmk}{\noindent\textbf{Remark}.}{\\}
\DeclareMathOperator{\RHom}{RHom}
\renewcommand{\H}{\mathcal{H}}
\newcommand{\sC}{\mathscr{C}}
\DeclareMathOperator{\PGL}{PGL}
\DeclareMathOperator{\pt}{(pt)}
\newcommand{\undy}[1]{\,\underline{\!#1}}
\newcommand{\thiswasS}{\P(W)\times \P^1}
\DeclareMathOperator{\Sym}{Sym}
\title{Derived equivalent Calabi-Yau 3-folds from cubic 4-folds}
\author[J R Calabrese and R P Thomas]{John R Calabrese and Richard P Thomas}
\newcommand{\ee}{{\hat{e}}}
\newcommand{\ebe}{{\hat{e}}}
\begin{document}

\begin{abstract} \noindent
We describe pretty examples of derived equivalences and autoequivalences of Calabi-Yau threefolds arising from pencils of cubic fourfolds.
The cubic fourfolds are chosen to be \emph{special}, so they each have an associated K3 surface.
Thus a pencil gives rise to two different Calabi-Yau threefolds: the associated pencil of K3 surfaces, and the baselocus of the original pencil -- the intersection of two cubic fourfolds.
They both have crepant resolutions which are derived equivalent.
%
%
\end{abstract}
\maketitle

\tableofcontents



\section{Statement of result} \label{1}
We exhibit two pairs of derived equivalent Calabi-Yau 3-folds $(X,Y)$.
In both examples, $X$ is a crepant resolution of a complete intersection, while $Y$ is K3-fibred.
In the first example the equivalence is twisted by a Brauer class on $Y$, but in the second example there is no twisting.
The first pair both have Betti numbers $b_2 = \text{2}$, $b_3 = \text{126}$ (and so Euler charcteristic $-\text{120}$); the second pair have Betti numbers $b_2 = \text{2}$, $b_3 = \text{134}$ and Euler characteristic $-\text{128}$.
In this section we state the results; in Section \ref{back} we explain the motivation. \medskip

For the first example, we start by describing $X$.
Consider a \emph{generic} pencil of cubic fourfolds containing a fixed plane $P\subset\PP^5$.
Let $X_0$ be the baselocus of the pencil -- a (3,3) complete intersection in $\PP^5$, smooth except for 12 ordinary double points (ODPs) on $P$.
It admits a projective Calabi-Yau small resolution $X$ given by blowing up the plane $P$:
$$
X:=\Bl_PX_0.
$$
To describe $Y$ we choose another plane $\PP^2\subset\PP^5$ disjoint from $P$.
For any cubic $C$ in the pencil, projection from $P$ to $\PP^2$ makes $\Bl_PC$ into a quadric fibration over $\PP^2$.
As the cubic $C$ varies through the pencil we get a quadric fibration over $\PP^2\times\PP^1$, degenerate along a $(6,4)$ divisor $D$ with 66 OPDs.

Let $Y_0$ denote the double cover of $\PP^2\times\PP^1$ branched over $D$.
This has 66 ODPs over the ODPs of $D$, so let
$$ Y \to Y_0 \onto \P^2 \times \P^1 $$
denote any small resolution (we show they are all non-K\"ahler).
The quadric fibration endows $Y$ with a Brauer class $\alpha \in H^2_{\text{\'et}}\left(Y,\O^\times_Y\right)$.
\begin{thm*}
There is an equivalence $D(X) \simeq D(Y,\alpha)$.
\end{thm*}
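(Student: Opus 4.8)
The plan is to realise both $D(X)$ and $D(Y,\alpha)$ as the distinguished component of two different semiorthogonal decompositions of one and the same auxiliary derived category: that of the total space of the universal quadric attached to the pencil. Write $M:=\Bl_P\P^5$; projection away from $P$ makes $M$ a $\P^3$-bundle over $\P^2$, and the two cubics generating the pencil become two relative quadrics $Q_0,Q_1$ on $M$, so that $X=\Bl_PX_0$ is the codimension-$2$ complete intersection $\{Q_0=Q_1=0\}\subset M$ (smooth for a generic pencil, since the baselocus contains $P$ and the small resolution is the proper transform). Set $\mathcal{Q}:=\{\lambda_0Q_0+\lambda_1Q_1=0\}\subset M\times\P^1$. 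One projection exhibits $\mathcal{Q}$ as the total space of the quadric surface fibration $\mathcal{Q}\to\P^2\times\P^1$ appearing in the statement (degenerate along $D$); the other exhibits $\mathcal{Q}=\Bl_XM$, the $\P^1$ of pencil directions collapsing precisely over the base locus $X$, so $\mathcal{Q}$ is in particular smooth. Thus $\mathcal{Q}$ carries a blow-up structure over $M$ and a quadric-fibration structure over $\P^2\times\P^1$, and I would play these off against each other.

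From the blow-up side, Orlov's formula for the codimension-$2$ blow-up $\mathcal{Q}=\Bl_XM$, together with a second application to $M=\Bl_P\P^5$ (codimension $3$) and the standard exceptional collections on $\P^5$ and on $P\cong\P^2$, yields a semiorthogonal decomposition
$$D(\mathcal{Q})=\langle D(X),\,\mathcal{E}_1,\dots,\mathcal{E}_{12}\rangle,$$
each $\mathcal{E}_i$ explicit --- a line bundle on $\mathcal{Q}$, or a sheaf supported on an exceptional divisor. From the quadric-fibration side, Kuznetsov's theorem on quadric fibrations of relative dimension $2$ (which allows degenerate fibres) gives
$$D(\mathcal{Q})=\langle D(\P^2\times\P^1,\mathcal{C}_0),\,\mathcal{F}_1,\dots,\mathcal{F}_{12}\rangle,$$
where $\mathcal{C}_0$ is the sheaf of even Clifford algebras of $\mathcal{Q}\to\P^2\times\P^1$, and the twelve $\mathcal{F}_j$ are two copies of the six-term exceptional collection $\{\O(i,j):0\le i\le 2,\ 0\le j\le 1\}$ on $\P^2\times\P^1$ pulled up and twisted by the relative $\O(1)$. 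Both exceptional ``tails'' have length $12$, and both distinguished components are the left orthogonal of their tail, so one gets $D(X)\simeq D(\P^2\times\P^1,\mathcal{C}_0)$ as soon as the two tails are shown to generate the same admissible subcategory of $D(\mathcal{Q})$.

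Establishing that last point is the core of the argument. I would write the $\mathcal{E}_i$ and $\mathcal{F}_j$ down explicitly as sheaves on the five-fold $\mathcal{Q}$ and then carry $\langle\mathcal{E}_1,\dots,\mathcal{E}_{12}\rangle$ onto $\langle\mathcal{F}_1,\dots,\mathcal{F}_{12}\rangle$ by a sequence of left and right mutations through exceptional objects, twists by the relative $\O(1)$ of either structure, and applications of the Serre functor; equality of left orthogonals then forces $D(X)\simeq D(\P^2\times\P^1,\mathcal{C}_0)$, and since $X$ is a strict Calabi--Yau threefold the left-hand side has no further decomposition so nothing is lost. Conceptually this is an instance of homological projective duality: $M\hookrightarrow\P\bigl(H^0(M,\mathcal{L})\bigr)$ is the fibrewise second Veronese of the $\P^3$-bundle, its HPD dual restricted over a generic line in the dual space (a pencil of cubics) is the Clifford datum of $\mathcal{Q}\to\P^2\times\P^1$, $X$ is the corresponding dual linear section, and the length count above is exactly the balanced case of the HPD theorem in which the residual categories coincide on the nose --- I would use whichever of the two formulations keeps the bookkeeping shortest.

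It then remains to pass from the Clifford category to $(Y,\alpha)$: away from the $66$ nodes of $D$ the algebra $\mathcal{C}_0$ is Azumaya of degree $2$ over (the smooth locus of) the double cover, so $D(\P^2\times\P^1,\mathcal{C}_0)$ agrees there with the $\alpha$-twisted derived category of that double cover; near each node of $D$ one is in a fixed local model --- a conifold-type singularity of $Y_0$ lying over a rank-$2$ (two-plane) fibre of $\mathcal{Q}$ --- where a direct computation identifies the Clifford category with that of a small resolution twisted by $\alpha$. Globalising gives $D(\P^2\times\P^1,\mathcal{C}_0)\simeq D(Y,\alpha)$, and hence $D(X)\simeq D(Y,\alpha)$. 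I expect the main obstacle to be the mutation step: keeping exact track of the twelve exceptional objects on $\mathcal{Q}$ under both the blow-down to $M$ and the quadric-fibration projection, verifying the semiorthogonality vanishings at every stage, and matching the two tails exactly. The node analysis for $\mathcal{C}_0$ is a secondary difficulty: since the small resolutions $Y\to Y_0$ are non-K\"ahler one cannot simply quote statements phrased for projective double covers, so the local model at each of the $66$ nodes has to be checked by hand (although the derived category itself is insensitive to the failure of the K\"ahler condition).
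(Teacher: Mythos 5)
Your two-step outline --- first $D(X)\simeq D(\P^2\times\P^1,\mathcal{C}_0)$ via comparing two semiorthogonal decompositions of $D(\mathcal{Q})$, then $D(\P^2\times\P^1,\mathcal{C}_0)\simeq D(Y,\alpha)$ --- matches the paper's architecture, and your tail-length count (twelve on each side) is right. For the first step, however, the paper does not carry out the mutations you propose: it invokes the relative HPD theorem of Auel--Bernardara--Bolognesi for quadric fibrations directly, after verifying the one hypothesis that the map $\O_{\P(W)}^{\oplus 2}\to\rho_*\pi^*\O(3)(-E)$ defined by the pencil has rank $2$ everywhere. Your plan to re-derive that theorem in this special case by explicitly mutating the blow-up tail onto the quadric-fibration tail is legitimate and would work, but it is substantially more labour than the citation, and your proposal does not actually exhibit the mutations --- so as written this step is a promissory note rather than a proof.

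The genuine gap is in the second step. You correctly observe that $\mathcal{C}_0$ is Azumaya over the smooth locus of the double cover $Y_0\to\P^2\times\P^1$, and that near each of the $66$ nodes one has a local model; but ``globalising'' is exactly the nontrivial content, and nothing in your argument supplies it. Two things must be produced globally: a small resolution $Y\to Y_0$ (there are $2^{66}$ candidates, and a priori an arbitrary assignment of local branches need not glue to a variety with the required properties), and a Brauer class $\alpha$ on all of $Y$ restricting to the obvious one over the Azumaya locus (the class certainly lives over the complement of the exceptional curves, but extending it across them is not automatic). The paper resolves both at once by citing Kuznetsov's analysis of the relative Fano scheme of lines $F\to\P^2\times\P^1$ of the quadric fibration: its Stein factorisation gives $Y_0$, flipping one plane $\Sigma_s^\pm$ in each degenerate Fano fibre produces $F'\to Y\to Y_0$ with $Y$ a small resolution and $F'\to Y$ an honest $\P^1$-bundle, so $\alpha=[F']$ exists globally, and Kuznetsov further supplies an Azumaya algebra $\mathscr{A}$ on $Y$ with $\sigma_*\mathscr{A}\cong\sC_0$ and shows $\sigma^*$ is an equivalence $D(\P^2\times\P^1,\sC_0)\simeq D(Y,\mathscr{A})$. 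Without this (or an equivalent global construction), your local computations at the nodes do not assemble, and the theorem does not follow. You should also note that identifying the even Clifford algebra of the ABB formalism with the one in Kuznetsov's quadric-fibration papers is itself an input (it is Proposition A.1 of ABB), which your sketch tacitly uses when passing between the two pictures.
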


The second example involves only projective Calabi-Yaus with no twisting.
It also comes from the data of a pencil of cubic fourfolds, this time required to all have an ODP at a fixed point $0\in\PP^5$.
Blowing up their baselocus $X_0$ in the singular point $0$ gives a smooth Calabi-Yau 3-fold
$$
X=\Bl_0X_0.
$$
The pencil also carries a universal hypersurface $\H \subset\PP^5\times\PP^1$.
Projecting from $\{0\}\times\PP^1\subset \PP^5\times\PP^1$ to a disjoint $\PP^4\times\PP^1$ gives a birational map
$$
\Bl_{\{0\}\times\PP^1}(\H)\To\PP^4\times\PP^1.
$$
This exhibits the left hand side as $\Bl_Y(\PP^4\times\PP^1)$, where $Y$ is the smooth intersection of a $(2,1)$ divisor and a $(3,1)$ divisor in $\PP^4\times\PP^1$.
It is therefore a Calabi-Yau 3-fold.

\begin{thm*}
There is a derived equivalence $D(X)\cong D(Y)$.
\end{thm*}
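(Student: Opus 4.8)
The plan is to realise $D(X)$ and $D(Y)$ as the complementary pieces of a single semiorthogonal decomposition of $D(W)$, for $W$ one smooth projective fivefold dominating both, and then to match the two decompositions by a mutation. First I would build the roof. Set $W:=\Bl_{\{0\}\times\PP^1}(\H)$; by the discussion preceding the statement $W$ is smooth and equals $\Bl_Y(\PP^4\times\PP^1)$, so Orlov's blow-up formula gives $D(W)=\langle D(Y),\ \gamma^*D(\PP^4\times\PP^1)\rangle$, with $\gamma\colon W\to\PP^4\times\PP^1$ the contraction and exceptional divisor $G$ a $\PP^1$-bundle over $Y$. On the other hand $\Bl_{\{0\}\times\PP^1}(\PP^5\times\PP^1)=(\Bl_0\PP^5)\times\PP^1$, and inside it $W$ is the total space of the pencil of proper transforms $\won C_t$ ($t\in\PP^1$) of the cubics; projecting to the first factor contracts a $\PP^1$ over precisely the locus of points lying on every $\won C_t$, which one checks is $\won C_0\cap\won C_\infty=\Bl_0X_0=X$, smooth of codimension two in $\Bl_0\PP^5$. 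Hence $W\cong\Bl_X(\Bl_0\PP^5)$ as well, and Orlov again gives $D(W)=\langle D(X),\ \beta^*D(\Bl_0\PP^5)\rangle$ with $\beta\colon W\to\Bl_0\PP^5$ the contraction and exceptional divisor $F\cong X\times\PP^1$. Since $\Bl_0\PP^5$ (via projection from $0$) and $\PP^4\times\PP^1$ are both $\PP^1$-bundles over $\PP^4$, each of the ``orthogonal'' subcategories $\beta^*D(\Bl_0\PP^5)$ and $\gamma^*D(\PP^4\times\PP^1)$ is generated by an explicit full exceptional collection of ten line bundles on $W$, and $D(X)$, $D(Y)$ are their complements.

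It remains to identify these two complements, which I expect to be the real work. The plan is a mutation argument: exhibit a sequence of mutations of $D(W)$ carrying the exceptional collection $\gamma^*D(\PP^4\times\PP^1)$ onto $\beta^*D(\Bl_0\PP^5)$ --- concretely, compute the left mutations of the line bundles $\gamma^*\O(a,b)$ through, and among, the intervening objects, and check that the collection one lands on is the $\beta^*$-collection --- so that the two complements agree up to equivalence and the composite of the mutation functors restricts to a Fourier--Mukai equivalence $D(Y)\cong D(X)$. To carry this out one must: (i) compute in $\Pic(W)\cong\Z^3$ the classes of $F$, of $G$, and of the divisor contracted by $W\to\H$, in terms of the pullbacks of the hyperplane classes of $\PP^4$, $\PP^1$ and $\PP^5$; (ii) use the resulting numerical relations, together with the restriction and adjunction triangles along $F$ and $G$, to rewrite each $\gamma^*\O(a,b)$ modulo the relevant spanning subcategories; and (iii) check at every step that the $\RHom$-groups behave as the mutation triangles require --- this reduces to cohomology computations for line bundles on $W$, on $F\cong X\times\PP^1$, on $G$ and on $\PP^4$. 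The existence of such a sequence is essentially equivalent to the theorem, so the real obstacle is finding the right path; it is the precise geometry --- the two blow-up structures and the exact numbers in the blow-up formulas --- that forces it through, and where virtually all of the computation lives.

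An equivalent and perhaps cleaner packaging is to write the Fourier--Mukai kernel directly as a suitable line bundle on $W$ obtained by convolving the two blow-up kernels, and then to check full faithfulness via Bridgeland's criterion (or via the semiorthogonality relations); essential surjectivity is then automatic from either decomposition of $D(W)$. Finally, the third projection $\mu\colon W\to\PP^4$ is generically a $\PP^1$-bundle degenerating along a hypersurface, i.e.\ a conic bundle, and running the argument through the Kuznetsov-type semiorthogonal decomposition for conic bundles would be the lower-dimensional, untwisted analogue of the quadric-surface-bundle route used for the first example, and should yield the same equivalence.
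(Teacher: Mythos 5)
Your high-level strategy coincides with the paper's.  The paper works directly with $\H\subset P\times\PP^1$ (which is exactly your $W$: the universal hypersurface over the pencil inside $P\times\PP^1$ is the blow-up of the Section~1 hypersurface along $\{0\}\times\PP^1$), uses the two blow-up descriptions $\H\cong\Bl_X(P)$ and $\H\cong\Bl_Y(\PP^4\times\PP^1)$ to produce two semiorthogonal decompositions of $D(\H)$, expands the non-$D(X)$ part into a ten-term exceptional collection of line bundles via Orlov applied to $P=\Bl_0\PP^5$, and mutates until that collection becomes the pullback of the standard ten-term collection on $\PP^4\times\PP^1$, so that the two orthogonal complements $D(X)$ and $D(Y)$ are identified.  (The direction you propose --- mutating $\gamma^*D(\PP^4\times\PP^1)$ onto $\beta^*D(\Bl_0\PP^5)$ --- is the reverse of the paper's but is inessential.)

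The gap is that you leave the mutation sequence entirely unspecified, and that sequence \emph{is} the proof.  You say yourself that ``the real obstacle is finding the right path'' and that ``virtually all of the computation lives'' there, but you never exhibit it.  In the paper this occupies roughly eight steps: a preliminary reshuffle of the Orlov collection on $P$, right- and left-mutations through $K_\H^{\pm1}=\O(\pm3,\pm1)(\mp2\ebe)$, mutations that interleave moves of $D(X)$ with moves of the line bundles, and two ``swap'' steps whose validity rests on full orthogonality of adjacent pairs --- each justified by an $\Ext$-vanishing on $\H$, on $e$, or on the exceptional divisor $X\times\PP^1$.  Absent such a sequence your argument restates the theorem rather than proves it; the strategy is right, the content is missing.  (Your parenthetical remark that $W\to\PP^4$ is a conic bundle is correct and not idle: the $(1,1)$ conic in the $\PP^1\times\PP^1$ fibre has Gram matrix $\left(\begin{smallmatrix} q_0 & q_\infty \\ c_0 & c_\infty\end{smallmatrix}\right)$, so the discriminant is precisely the quintic $q_0c_\infty-q_\infty c_0=0$ appearing later in the paper's proof of the birationality of $X$ and $Y$.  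But turning this into a proof would require its own Kuznetsov-type conic-bundle HPD argument, which neither you nor the paper carries out.)
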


Nick Addington and Paul Aspinwall pointed out that in this example $X$ and $Y$ are birational, and so already derived equivalent. Our derived equivalence is very different, as we explain in Section \ref{odpeg}, so combined with the birational equivalence we get an exotic autoequivalence of $D(X)$.

\smallskip\noindent\textbf{Acknowledgements.}
This paper is an extended exercise in Kuznetsov's beautiful ideas about derived categories of cubic fourfolds \cite{Cu}, fractional Calabi-Yau categories \cite[Section 4]{V14}, and Homological Projective Duality (HPD) \cite{HPD}. Our debt to him is clear.

We would like to warmly thank Nick Addington and Ed Segal for useful discussions.
In addition, JC is grateful to Roland Abuaf, Michele Bolognesi, Enrica Floris, J{\footnotesize$\otimes$}rgen Rennemo and Brian Lehmann for helpful conversations on topics related to this paper.
Finally, we would like to thank the referee for helpful comments.

RT was partially supported by EPSRC programme grant EP/G06170X/1 and JC was partially supported by NSF RTG grant 1148609.


\smallskip\noindent\textbf{Conventions.}
We work over the field of complex numbers $\C$.
By $D(X)$ we mean the \emph{bounded} derived category of \emph{coherent} sheaves on $X$.

\section{Background}\label{back}
This section is purely motivational, and the reader can safely skip to the two examples in Sections \ref{main} and \ref{odpeg}.
There we prove the results from first principles. \medskip

\subsection*{The short version}
A very brief summary is that we use the simplest possible case of HPD: the close relationship between the derived category of the baselocus of a pencil of hypersurfaces and the derived category of the universal hypersurface $\H\to\PP^1$ over the pencil.

We apply this to pencils of special cubic fourfolds, whose derived categories are very close to those of K3 surfaces \cite{Cu}.

The upshot is a relation between the derived category of the intersection of two cubic fourfolds, and the derived category of the K3 fibration over $\PP^1$ associated to the universal family of cubic fourfolds over the pencil.

Choosing the cubics to be special (so that we can really associate K3 surfaces to them) makes both sides of the above description singular.
We find examples where this issue can be resolved (crepantly).

\subsection*{The longer version}
A smooth hypersurface $H \subset \P^n$ of degree $d<n$ has derived category
\beq{fracCY}
D(H)=\big\langle\A_H,\O_H(d),\,\O_H(d+1),\,\ldots,\O_H(n)\big\rangle,
\eeq
where $\O_H(d),\ldots,\O_H(n)$ is an exceptional collection, and $\A_H$ -- the ``interesting part" of $D(H)$ -- is its right orthogonal:
\begin{eqnarray*}
\A_H \!&:=&\! \big\langle \O_H(d),\,\ldots,\O_H(n)\big\rangle^\perp \\ 
&=&\!\{E\in D(H)\colon \RHom(\O_H(i),E)=0\ \text{ for }\ i=d,\ldots,n\}.
\end{eqnarray*}

The category $\A_H$ is a ``fractional Calabi-Yau category" of dimension $(n+1)(1-2/d)$; that is, it has a Serre functor $S_{\A_H}$, some power of which is just a shift:
$$
S_{\A_H}^d\cong[(n+1)(d-2)].
$$

\medskip In the first case $d=1$, the category $\A_H$ is empty: the exceptional collection already generates $D(H)$, by Be{\u\i}linson's theorem.

The next case is $d=2$, i.e.~$H$ is a smooth quadric.
Kapranov was the first to show that $\A_H$ is a zero dimensional Calabi-Yau category: in fact it is equivalent to the derived category of 1 or 2 points, generated by its 1 or 2 \emph{spinor sheaves} when $n$ is even or odd respectively.

The next case, $d=3$, is poorly understood in general (though, for any $d$ there is a highly non-commutative description in terms of $A_\infty$-algebras \cite{BDFIK}).
One interesting example is when $n=5$ so that $H$ is a cubic fourfold.
In this case $\A_H$ is $CY_2$, namely the Serre functor is the shift by two.
For special cubic fourfolds, $\A_H$ is equivalent to $D(K3)$, for a genuine K3 surface (possibly with a twist by a Brauer class) \cite{Cu}.
For the generic cubic, $\A_H$ cannot be the derived category of a variety (for example the rank of its numerical Grothendieck group is too small).
As it is a deformation of some $D(K3)$ and has the same Hochschild (co)homology, one usually calls $\A_H$ a \emph{non-commutative} $K3$ surface.

Our first example comes from a combination of the cases $(d,n) = (2,3)$ and $(d,n) = (3,5)$.

\subsection{Pencils of cubic fourfolds}
Now suppose we have a \emph{pencil} $\PP^1$ of cubic fourfolds.
Denote by
\begin{equation}\label{univ}
        \begin{tikzcd}
                \H \ar{r}{\pi} & \P^1 
        \end{tikzcd}
\end{equation}
the universal cubic 4-fold over $\PP^1$: for any $t \in \P^1$, the fibre $\H_t$ is the corresponding cubic fourfold of the pencil.
The total space $\H$ is a $(3,1)$ divisor in $\P^5 \times \P^1$.
A family version of the previous discussion (over the base $\P^1$) gives us a semi-orthogonal decomposition
\begin{equation}\label{semi} 
        D(\H) = \big\langle \A_\H\, , \, \pi^*D(\P^1)(i,0) : i= 3,4,5 \big\rangle
\end{equation}
where the ``interesting part'' $\A_\H$ can be rewritten as
$$ \A_\H = \big\langle \O_{\mathcal H}(i,j) : i=3,4,5,\,j=0,1\big\rangle^\perp.$$
One verifies that $\A_\H$ is a $CY_3$-category, i.e.~a non-commutative Calabi-Yau threefold.
Using $\pi$, we can view $\A_\H$ as a $CY_2$-fibration with base $\P^1$, whose fibre at $t \in \P^1$ is the non-commutative $K3$ surface $\A_{\H_t}$.

\medskip
\noindent{\bfseries Digression.}
        We take a moment to make this notion less vague.
        The category $D(\P^1)$ has a tensor product.
        Via pullback along $\pi$, any object $E \in D(\H)$ can be tensored with an object of $D(\P^1)$.
        We might think of $D(\H)$ as being a module over the commutative algebra $D(\P^1)$.
        In fact, this becomes literally true in the ($\infty$-)category of stable ($\infty$-)categories.
        
        Given a point $t \in \P^1$, we have the fibre $\H_t \subset \H$.
        As the (derived) pullback $t^*\colon D(\P^1) \to D\pt$ preserves the tensor product, we can think of it as giving a homomorphism of algebras in this sophisticated category of stable categories.
        We can also pull back $D(\P^1)$-modules along $t$ and we have $D\pt \otimes_{D(\P^1)} D(\H) = \text{Perf}(\H_t) \subset D(\H_t)$, where the latter is the subcategory of \emph{perfect complexes}, which coincides with the whole $D(\H_t)$ when $\H_t$ is smooth.
        Now that all this is in place, let's go back to $\A_\H$.
        
        The decomposition \eqref{semi} is \emph{$D(\P^1)$-linear}, in the sense that tensoring with $D(\P^1)$ preserves each component.
        In other words, any component $\mathcal C$ has the structure of a $D(\P^1)$-module.
        
        In particular, the module $\A_\H$ can be pulled back along any $t \in \P^1$.
        It is shown in \cite[Prop. 5.1]{kusod} that $D\pt \otimes_{D(\P^1)} \A_\H$ is in fact $\A_{\H_t} \cap \text{Perf}(\H_t)$.

\bigskip
Homological projective duality relates the derived categories of universal hypersurfaces over linear systems with the derived categories of their base loci.
The base locus of the pencil is a (3,3) Calabi-Yau threefold complete intersection $X_{3,3}$ in $\PP^5$, and by HPD it's derived equivalent to $\A_{\mathcal H}$.
\begin{prop} \label{prop}
$D(X_{3,3})\simeq \A_{\mathcal H}.$
\end{prop}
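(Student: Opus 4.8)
The plan is to deduce Proposition~\ref{prop} from Kuznetsov's Homological Projective Duality~\cite{HPD}, and then to indicate a hands-on proof --- the version that will be needed once $X_{3,3}$ acquires nodes and small resolutions enter the picture, in Sections~\ref{main} and~\ref{odpeg}. For the HPD argument I would re-embed $\P^5$ by the cubic Veronese $\P^5\into\P(V)$, $V^\vee=H^0(\P^5,\O(3))$, so that Be{\u\i}linson's collection becomes the \emph{rectangular} Lefschetz decomposition $D(\P^5)=\langle\A_0,\A_1(1)\rangle$ with $\A_0=\A_1=\langle\O,\O(1),\O(2)\rangle$, of length $2$ with respect to $\O_{\P^5}(3)=\O_{\P(V)}(1)|_{\P^5}$. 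A pencil of cubics is then a $2$-plane $L\subset H^0(\O(3))=V^\vee$; its base locus is the linear section $X_{3,3}=\P^5\cap\P(L^\perp)$, and the universal cubic~\eqref{univ} is exactly the universal hyperplane section $\H=\mathcal X_L\subset\P^5\times\P^1$; for a generic pencil both are smooth of the expected dimension. Because $\dim L=2$ equals the (rectangular) length, Kuznetsov's linear-section theorem gives $D(X_{3,3})\simeq\mathcal C_L$ with no residual exceptional collection (there being no Lefschetz block $\A_i(i)$ with $i\geq 2$), while his universal-hyperplane-section theorem exhibits $\mathcal C_L$ as a semiorthogonal summand $D(\H)=\langle\mathcal C_L,\ \A_1(1)\boxtimes D(\P^1)\rangle$ with $\A_1(1)\boxtimes D(\P^1)=\langle\O_\H(i,j):i=3,4,5,\ j=0,1\rangle$. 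Since by definition $\A_\H=\langle\O_\H(i,j):i=3,4,5,\ j=0,1\rangle^\perp$, this summand is precisely $\A_\H$, so $D(X_{3,3})\simeq\mathcal C_L=\A_\H$.

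For the hands-on version I would first note that the \emph{other} projection $p\colon\H\to\P^5$ realises $\H$ as $\Bl_{X_{3,3}}\P^5$: writing the pencil as $t_0F_0+t_1F_1$, the fibre of $p$ over $x$ is cut out in $\P^1$ by the linear equation $t_0F_0(x)+t_1F_1(x)=0$, hence is a single point unless $x\in X_{3,3}$, where it is all of $\P^1$; comparing with $\P(N^\vee_{X_{3,3}/\P^5})=\P(\O(-3)^{\oplus 2})=X_{3,3}\times\P^1$ identifies the blow-up, with exceptional divisor $E=X_{3,3}\times\P^1$ and $\O_\H(E)=\O_\H(3,-1)$ (compare $K_\H=\O_\H(-3,-1)$, from the $(3,1)$-divisor description, with $K_\H=p^*\O_{\P^5}(-6)\otimes\O_\H(E)$, from the blow-up). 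Since $X_{3,3}$ has codimension $2$, Orlov's blow-up theorem contributes a single copy of $D(X_{3,3})$, and combining it with Be{\u\i}linson for $D(\P^5)$ pulled back along $p$ gives
\[
D(\H)=\big\langle\,\Phi\big(D(X_{3,3})\big),\ \O_\H,\ \O_\H(1,0),\ \O_\H(2,0),\ \O_\H(3,0),\ \O_\H(4,0),\ \O_\H(5,0)\,\big\rangle,
\]
with $\Phi(-)=Rj_*\big(q^*(-)\otimes\O_\H(E)|_E\big)$, $j\colon E\into\H$, $q=p|_E$. Unwinding~\eqref{semi} through $\pi^*D(\P^1)=\langle\O_\H,\O_\H(0,1)\rangle$ gives the parallel decomposition $D(\H)=\langle\A_\H,\ \O_\H(3,0),\O_\H(3,1),\O_\H(4,0),\O_\H(4,1),\O_\H(5,0),\O_\H(5,1)\rangle$. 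Both present $D(\H)$ as a distinguished component followed by six line bundles sharing the sub-block $\O_\H(3,0),\O_\H(4,0),\O_\H(5,0)$, so what remains is to carry the first six-term collection onto the second by a chain of mutations, interleaved with the autoequivalence $-\otimes\O_\H(0,1)$; being equivalences, these identify $\Phi(D(X_{3,3}))$ with $\A_\H$.

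That last step is where I expect the real difficulty to lie: one must track the mutations and the shifts they introduce carefully enough that the six residual line bundles come out \emph{exactly} as $\O_\H(i,j)$, $i=3,4,5$, $j=0,1$ in the correct order, and so that the distinguished component lands on $\A_\H$ on the nose. The raw inputs are elementary --- the Koszul resolution of $\O_\H$ in $\P^5\times\P^1$, the structure sequence $0\to\O_\H(-3,1)\to\O_\H\to\O_E\to0$, and the cohomology of line bundles on $\P^5\times\P^1$ compute every relevant $\RHom$, and the numerical coincidence $\O_\H(E)=\O_\H(3,-1)$ is what makes the two strings of line bundles commensurable --- but assembling these into a clean mutation chain is fiddly. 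The mechanism behind the cancellations is already visible in the HPD proof: restricted to $E=X_{3,3}\times\P^1$ the classes one needs to kill factor through $H^\bullet(\P^1,\O_{\P^1}(-1))=0$.
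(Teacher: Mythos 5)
Your Part 1 --- re-embedding $\P^5$ by the cubic Veronese so that Be{\u\i}linson's collection becomes a rectangular Lefschetz decomposition of length $2$, then invoking Kuznetsov's linear-section and universal-hyperplane-section theorems --- is a legitimate alternative route, and you correctly identify the residual category $\mathcal C_L$ with $\A_\H$. The paper deliberately sidesteps this black box, remarking that pencils are the ``most elementary case of HPD'' and opting for a self-contained Orlov-plus-mutations proof; so your Part 1 is the ``big theorem'' route that the paper goes out of its way to avoid. (It does rely implicitly on the base-change machinery of \cite{kusod} to get from the decomposition of the universal hyperplane section over all of $\P(V^*)$ to the one over $\P(L)$, but that is fine.)

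Your Part 2 is exactly the paper's strategy --- identify $\H\cong\Bl_{X_{3,3}}\P^5$, apply Orlov, mutate toward \eqref{semi} --- but it stops precisely where the paper's proof begins, namely at the mutation chain, and that chain \emph{is} the content of the proof. You are right that care is needed (the paper itself credits Kuznetsov for indicating which mutations to make), but with the right starting point the chain collapses. The paper writes Orlov's decomposition with $\iota_*p^*D(X_{3,3})$ at the \emph{end}, not your $\Phi(D(X_{3,3}))$ at the front:
$$D(\H)=\Langle\O,\O(1,0),\O(2,0),\O(3,0),\O(4,0),\O(5,0),\,\iota_*p^*D(X_{3,3})\Rangle.$$
Right-mutating the first three terms to the far end tensors them by $K_\H^{-1}=\O(3,1)$, producing exactly $\O(3,1),\O(4,1),\O(5,1)$, so the decomposition becomes
$$\Langle\O(3,0),\O(4,0),\O(5,0),\,\iota_*p^*D(X_{3,3}),\,\O(3,1),\O(4,1),\O(5,1)\Rangle.$$
Since $\A_\H$ is by definition the right orthogonal of these six line bundles, left-mutating $\iota_*p^*D(X_{3,3})$ past $\O(3,0),\O(4,0),\O(5,0)$ \emph{is} the projection onto $\A_\H$ and hence an equivalence. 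So: one tensoring step plus the observation that projection equals mutation, rather than the long fiddly interleaving with $-\otimes\O(0,1)$ that your front-loaded Orlov decomposition would require. Your sketch is on the right track and your mechanism (``the classes one needs to kill factor through $H^\bullet(\P^1,\O(-1))=0$'') is the right intuition, but as written the hands-on argument has a gap: the mutation chain, which is where all the work lives, is left undone.
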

Since pencils form the most elementary case of HPD, the equivalence can
be proved directly, at least when $\H$ and $X_{3,3}$ are smooth, using Orlov's decomposition \cite{Or}.
\begin{proof}
The total space of the universal hypersurface \eqref{univ} is just the blow up of $\PP^5$ in the baselocus:
\beq{shesgonnablow}
\Bl_{X_{3,3}}\PP^5\ \cong\ \H.
\eeq
Using the notation
\begin{center}
\begin{tikzcd}
        \hspace{-1.95cm}\P^1 \times X_{3,3} \cong E \ar{d}{p} \ar[hook]{r}{\iota}  & \H \\
        X_{3,3} &
\end{tikzcd}    
\end{center}
for the exceptional divisor of \eqref{shesgonnablow}, the equivalence is given by the composition
\beq{compost}
D(X_{3,3})\Rt{p^*}D(E)\Rt{\iota_*}D(\H)\To\A_{\H\,},
\eeq
where the last arrow is projection (the left adjoint of $\A_{\H}\into D(\H)$).

In fact, one may start with Orlov's decomposition
$$
      D(\H) = \Langle \O, \O(1,0), \O(2,0), \O(3,0), \O(4,0), \O(5,0), \,\iota_*p^*D(X_{3,3}) \Rangle
$$
and mutate away.\footnote{We are grateful to A.~Kuznetsov for indicating which mutations to make here.}
By right mutating the first three terms to the end, thus tensoring them with $K_{\H}^{-1} = \O(3,1)$, one obtains
$$
\Langle\O_{\mathcal H}(3,0),\O_{\mathcal H}(4,0),\O_{\mathcal H}(5,0),\,
\iota_*p^*D(X_{3,3}),\,\O_{\mathcal H}(3,1),\O_{\mathcal H}(4,1),\O_{\mathcal H}(5,1)\Rangle.
$$
To project to the right orthogonal of all of these sheaves, we left mutate $\iota_*p^*D(X_{3,3})$ past $\O_{\mathcal H}(3,0),\O_{\mathcal H}(4,0),\O_{\mathcal H}(5,0)$. Therefore the composition \eqref{compost} is $\mathbb L_{\O_{\mathcal H}(3,0)}\mathbb L_{\O_{\mathcal H}(4,0)}\mathbb L_{\O_{\mathcal H}(5,0)}
\iota_*p^*D(X_{3,3})$, giving an isomorphism to
$$
\A_{\mathcal H}=\Langle\O_{\mathcal H}(3,0),\O_{\mathcal H}(4,0),\O_{\mathcal H}(5,0),
\O_{\mathcal H}(3,1),\O_{\mathcal H}(4,1),\O_{\mathcal H}(5,1)\Rangle^\perp
$$
which is what we needed to conclude.
\end{proof}

\subsection{Cubic fourfolds containing a plane} Proposition \ref{prop} says we can see $X_{3,3}$ as a noncommutative K3 fibration.
To bring things down to earth, we would like to choose an example where the K3 fibration is commutative.

Examples of cubic fourfolds $H$ for which $\A_H$ is really the derived category of a geometric K3 surface are given in \cite{Cu}.
The easiest is to take $H$ to contain a plane.

Write $\PP^5=\PP(V\oplus W)$ where $V,W$ are copies of $\C^3$, and consider a cubic $H$ containing the plane $\PP(V)$.
We can blow up the plane $\P(V)$ and project away from it onto $\P(W)$, producing a $\P^3$-bundle $\Bl_{\PP(V)}\PP^5\to\PP(W)$.
Taking the intersection of a fibre with the preimage of $H$ gives a cubic surface containing $\PP(V)$; it is therefore the union of $\PP(V)$ and a quadric.
Intersecting with the proper transform of $H$ instead removes $\PP(V)$, so that
\beq{qfib}
\Bl_{\PP(V)}H \stackrel{}\To\PP(W)
\eeq
is a quadric surface fibration over two-dimensional projective space $\P(W)$.

We think of this as being a family version, with base $\P(W)$, of the $d=2$, $n=3$ case of equation \eqref{fracCY}.
The (interesting part of the) derived category of a smooth fibre of \eqref{qfib} is the derived category of two points.
As we move along the base, these two points vary, describing a double cover $S \onto \P(W)$ branched along the locus of singular fibres.
This locus is a sextic curve and $S$ is a K3 surface.

Thinking of $S$ as a moduli space of spinor sheaves on the fibres of \eqref{qfib}, its product with $\Bl_{\P(V)}H$ carries, analytically locally over $S$, a universal sheaf.
On overlaps the sheaves glue, up to invertible scalars.
Since the gluings might not satisfy the cocycle condition, they define a Brauer class $\alpha$ on $S$.
The universal sheaf then exists as a $p^*\alpha^{-1}$-twisted sheaf (where $p\colon \Bl_{\P(V)}H \times{S} \to S$ is the projection); using it as a Fourier-Mukai kernel produces an embedding $D(S,\alpha) \to D(\Bl_{\P(V)}H)$, where the former is the derived category of $\alpha$-twisted sheaves.
On the other hand, $\A_H$ embeds in $D(\Bl_{\P(V)}H)$ by the blow up formula.
By performing a series of mutations, Kuznetsov shows that indeed $D(S,\alpha) \simeq \A_H$ \cite[Theorem 4.3]{Cu}.
Therefore in this case we see the K3 category arising from the commutative K3 surface $S$. \medskip

If we now start with a whole pencil $\H$ of cubics containing $\P(V)$, we would like to have a family version of the previous discussion, over the $\P^1$ base of the pencil.
Said differently, we would like to realise commutatively a special case of Proposition \ref{prop}.
There is a hitch, however.
The baselocus $X_{3,3}$ of a generic pencil of cubics containing the plane $\P(V)$ has twelve ODPs.
Dually, the universal hypersurface $\mathcal H$ \eqref{univ} is a fivefold with twelve ODPs as well.
For instance, this implies that Orlov's blow up formula breaks down.
To remedy this, we need to somehow resolve the singularities.
This is why in Section \ref{main} we start over, working with the right blow up of $\PP^5$ from the beginning.

\subsection{Cubic fourfolds with an ODP} Let $H$ be a cubic fourfold with a single ODP at $0\in\PP^5$.
Projecting to a disjoint $\PP^4\subset\PP^5$ gives a map
$$
\Bl_0H\To\PP^4
$$
which is degree 1 and so birational.
It exhibits the left hand side as 
$$
\Bl_0H\ \cong\ \Bl_S(\PP^4),
$$
where $S$ is a $(2,3)$ complete intersection $K3$ surface in $\PP^4$.
This correspondence gives a derived equivalence\footnote{
        The singularity in $H$ means we have to be more careful in defining $\A_H$; see \cite{Cu} for details.
        We will avoid this issue in Section \ref{odpeg}.
        }
\cite{Cu}
$$
\A_H\ \cong\ D(S).
$$
So we can play the same trick again, relating the baselocus of a pencil of such cubics with the associated $\PP^1$ family of K3 surfaces $S$.
Again the results are singular on both sides but can be resolved.
In Section \ref{odpeg} we work in $\Bl_0\PP^5$ from the beginning to avoid these singularities.

\section{First Example} \label{main}
Fix two copies $V,W$ of $\C^3$ and write $\P^5 = \P(V \oplus W)$.
Projecting from the plane $\P(V) \subset \P^5$ to the plane $\P(W)$ gives the following diagram.
\begin{center}
\begin{tikzcd}
        Z:= \Bl_{\P(V)} \P^5 \ar{r}{\rho} \ar{d}[swap]{\pi} & \P(W) \\
        \P^5 &
\end{tikzcd}    
\end{center}
The map $\rho$ is a $\P^3$-bundle.
More precisely, $\rho$ is the projective completion  of $\undy{V}(1)$ over $\PP(W)$:
\beq{projb}
\PP\big(\undy{V}(1)\oplus\O_{\PP(W)}\big)\Rt{\rho}\PP(W).
\eeq
Here $\undy{V}$ denotes the trivial bundle $V\otimes_\C\O_{\PP(W)}$.
As the projectivisation of a vector bundle, $Z$ carries a tautological line bundle which we denote by $\O_\rho(-1)$.
A computation gives
\beq{linebun}
\pi^*\O_{\PP^5}(3)(-E)\ \cong\ \O_\rho(2)\otimes\rho^*\O_{\PP(W)}(3)
\eeq
where $E\cong\PP(V)\times\PP(W)$
 is the exceptional divisor of $\pi$. 
In particular,
\begin{align}
        \label{NW}
        \rho_*(\pi^*\O_{\PP^5}(3)(-E)) &\cong \rho_* \O_\rho (2) \otimes \O_{\P(W)}(3) \\
                                                \notag&\cong S^2 \left( \undy{V}^*(-1) \oplus \O_{\P(W)} \right) \otimes \O_{\P(W)}(3) \\
                                                \notag&\cong \left( 
                                                        \left( S^2\undy{V}^* \right) (-2) \oplus
                                                        \undy{V}^*(-1) \oplus
                                                        \O_{\P(W)} \right) \otimes \O_{\P(W)}(3) \\
                                                        &\cong \left( S^2 \undy{V}^* \right) (1)\oplus \undy{V}^*(2)\oplus\O_{\P(W)}(3). \notag   
\end{align}
Taking global sections yields
\begin{equation}\label{lineb}
\begin{tikzcd}
        H^0( Z, \pi^* \O_{\P^5}(3)(-E) ) =
        H^0 \left( \P^5, \I_{\P(V)}(3) \right)
        \ar[hook]{r}
        \ar[-,double,double distance=1pt]{d}
        & H^0\big( \P^5, \O_{\P^5}(3) \big) 
        \ar[-,double,double distance=1pt]{d}
        \\
        \left( S^2 V^* \otimes W^* \right) \oplus 
        \left( V^* \otimes S^2 W^* \right) \oplus 
        \left( S^3 W^* \right)
        \ar[hook]{r}
        & S^3\left( V^* \oplus W^* \right)
\end{tikzcd}
\end{equation}
using the fact that $\pi_* \O_{\P^5}(-E) = \I_{\P(V)}$.

\smallskip
\begin{lem}
        The line bundle $\pi^*\O_{\PP^5}(3)(-E)$ has no baselocus.
\end{lem}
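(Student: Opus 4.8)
The plan is to show that the linear system $|\pi^*\O_{\PP^5}(3)(-E)|$ on $Z$ separates points, equivalently that the sections computed in \eqref{NW}--\eqref{lineb} have no common zero. The key observation is that by \eqref{linebun} we have $\pi^*\O_{\PP^5}(3)(-E)\cong\O_\rho(2)\otimes\rho^*\O_{\PP(W)}(3)$, so it suffices to work fibrewise over $\PP(W)$ using the $\P^3$-bundle structure $\rho\colon Z=\PP(\undy V(1)\oplus\O_{\PP(W)})\to\PP(W)$. First I would fix a point $w\in\PP(W)$ and note that $\rho^*\O_{\PP(W)}(3)$ is the pullback of a line bundle, so on the fibre $\rho^{-1}(w)\cong\P^3$ the restricted system is governed by $\O_\rho(2)$, i.e.\ by the degree-two part of the relative homogeneous coordinate ring $S^\bullet(\undy V^*(-1)\oplus\O_{\PP(W)})$. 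From \eqref{NW} the relevant summands are $(S^2\undy V^*)(1)$, $\undy V^*(2)$ and $\O_{\PP(W)}(3)$.

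The crucial point is the presence of the summand $\O_{\PP(W)}(3)$ in \eqref{NW}: this is the pushforward of the subsheaf of $\O_\rho(2)$ spanned by the square of the coordinate cutting out the zero-section-complement (i.e.\ the section of $\O_\rho(1)$ vanishing on $E\cap\rho^{-1}(w)=\PP(V)\times\{w\}$). Concretely, $Z\smallsetminus E\cong\pi^{-1}(\PP^5\smallsetminus\PP(V))$, and on this open set $\pi^*\O_{\PP^5}(3)(-E)$ is just $\pi^*\O_{\PP^5}(3)$ with the canonical section cutting out $3\PP(V)$; the factor $\O_{\PP(W)}(3)$ corresponds via \eqref{lineb} to $S^3W^*$, which certainly has no common zero on $\PP(W)$. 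Hence on the complement of $E$ the system is already basepoint-free. So the plan reduces to checking that $E$ itself contains no base point. On $E\cong\PP(V)\times\PP(W)$ we restrict the other two families of sections: $S^2V^*\otimes W^*$ (from $(S^2\undy V^*)(1)$) and $V^*\otimes S^2W^*$ (from $\undy V^*(2)$). The first family restricts, along the $\PP(V)$ direction, to the complete system $|\O_{\PP(V)}(2)|\boxtimes|\O_{\PP(W)}(1)|$ and hence separates points of $\PP(V)$ for any fixed $w$; combined with the $V^*\otimes S^2W^*$ part which supplies the $\O_{\PP(W)}(2)$ directions, one sees the common zero locus on $E$ is empty. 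Actually cleaner: on $E$, the bundle $\pi^*\O_{\PP^5}(3)(-E)$ restricts to $\O_{\PP(V)}(2)\boxtimes\O_{\PP(W)}(3)$ (by \eqref{linebun}, since $\O_\rho(-1)|_E\cong\O_{\PP(V)}(-1)\boxtimes\O_{\PP(W)}(1)$ roughly), and the restriction of the global sections surjects onto enough of $H^0(\O_{\PP(V)}(2)\boxtimes\O_{\PP(W)}(3))$ to kill the base locus.

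I expect the main obstacle to be the bookkeeping in the last step: pinning down exactly which subspace of $H^0\big(E,\pi^*\O_{\PP^5}(3)(-E)|_E\big)$ is hit by restriction from $Z$, and verifying it has no common zeros on $E\cong\PP(V)\times\PP(W)$. The cleanest route is probably to avoid $E$ altogether by a symmetry/genericity argument: the base locus of $|\pi^*\O_{\PP^5}(3)(-E)|$ is $\pi$-related to $\bigcap_{H\ni\PP(V)}H$, the intersection of all cubics through $\PP(V)$, pulled back and with $E$ removed; but $\bigcap_{H\ni\PP(V)}H=\PP(V)$ itself (a monomial computation: the only points where every cubic of the form $S^2V^*\otimes W^*\oplus V^*\otimes S^2W^*\oplus S^3W^*$ vanishes are those with all $W^*$-coordinates zero, i.e.\ $\PP(V)$), so after blowing up $\PP(V)$ the base locus of the proper transform system is empty. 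I would present this monomial computation — every monomial in the three summands involves at least one $W$-coordinate, while a monomial in $S^3V^*$ does not — as the heart of the proof, and then deduce freeness of the full system on $Z$ from \eqref{linebun} plus the fact that $\O_\rho(2)\otimes\rho^*\O_{\PP(W)}(3)$ is visibly free along the fibres once one adjoins the $\O_{\PP(W)}(3)$ summand.
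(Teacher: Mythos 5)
Your intuition is right and both routes you sketch are viable, but each has a gap as stated.

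In the restriction-to-$E$ route (which is the paper's), your twist is off. Restricted to $E\cong\P(V)\times\P(W)$ the bundle is $\pi^*\O_{\P^5}(3)(-E)|_E\cong\O_{\P(V)}(2)\boxtimes\O_{\P(W)}(1)$, not $\O_{\P(V)}(2)\boxtimes\O_{\P(W)}(3)$. Using \eqref{linebun}: $\O_\rho(-1)|_E\cong\O_{\P(V)}(-1)\boxtimes\O_{\P(W)}(1)$, so $\O_\rho(2)|_E\cong\O_{\P(V)}(2)\boxtimes\O_{\P(W)}(-2)$, and tensoring with $\rho^*\O_{\P(W)}(3)$ gives the answer. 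Its sections are $S^2V^*\otimes W^*$, which are base-point-free, and by \eqref{lineb} they are all hit by restriction from $Z$. With the twist corrected your first route coincides with the paper's proof.

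The ``avoid $E$'' route is a genuinely different argument, but as written it conflates set-theoretic and scheme-theoretic base loci, and that is precisely where the danger lies. Knowing that all cubics through $\P(V)$ cut out $\P(V)$ as a set does not by itself imply that the proper transforms are base-point-free: they could all still meet along some locus on $E$ if the base ideal of the system were a non-reduced thickening of $\I_{\P(V)}$. What you need is that the base ideal of the linear system $H^0(\I_{\P(V)}(3))\subset H^0(\O_{\P^5}(3))$ is exactly $\I_{\P(V)}$, i.e.\ that $\I_{\P(V)}(3)$ is globally generated. That is easy — $\I_{\P(V)}$ is generated by the three linear forms spanning $W^*\subset V^*\oplus W^*$, so already $\I_{\P(V)}(1)$ is globally generated and hence so is $\I_{\P(V)}(3)$ — but it, rather than the set-theoretic monomial count, is the real content. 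Given it, $H^0(\I_{\P(V)}(3))\otimes\O_{\P^5}(-3)\onto\I_{\P(V)}$ pulls back along $\pi$ to a surjection onto $\O_Z(-E)$, so $|\pi^*\O(3)(-E)|$ is base-point-free. Fleshed out this way, the second route is a clean alternative to the paper's proof, trading the explicit restriction-to-$E$ computation for a global-generation statement on $\P^5$.
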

\begin{proof}
This can be deduced from \eqref{lineb} as follows.
It is clear that the baselocus of the linear system $H^0(\P^5,\I_{\P(V)}(3)) \subset H^0(\P^5,\O_{\P^5}(3))$ is the plane $\P(V)$.
Therefore the baselocus of $\pi^*\O_{\PP^5}(3)(-E)$ must be contained in the exceptional divisor $E\cong\PP(V)\times\PP(W)$.
But
$\pi^*\O_{\PP^5}(3)(-E)|_E\cong\O_{\PP(V)}(2)\boxtimes\O_{\PP(W)}(1)$, which has sections $S^2V^*\otimes W^*$ (without baselocus).
These are in turn surjected onto by the left hand side of \eqref{lineb}.
\end{proof}
Therefore, by Bertini's theorem we can pick a pencil
$$
\P^1 = \langle s_0, s_\infty\rangle \,  \subset \, H^0\left( Z, \pi^*\O(3)(-E) \right)
$$
whose baselocus
\beq{Xdef}
X:=\{s_0=0=s_\infty\}\subset Z 
\eeq
is smooth, and over which the universal hypersurface
\begin{equation}\label{unihyp}
        \begin{tikzcd}
                \Bl_X Z \,\,\, \cong \hspace{-1.2cm} & \H \ar{d} \,\,  &\hspace{-1.3cm} = \{xs_0 + ys_\infty = 0 \} \ar[hook]{r} & Z \times \P^1 \\
                & \P^1 & &
        \end{tikzcd}
\end{equation}
is also smooth.
Since the anticanonical bundle of $Z$ is $\pi^*\O(6)(-2E)$ we see that $X$ is a Calabi-Yau 3-fold.

\begin{rmk}
	The projection of $X$ to $\PP^5$ is also easily seen to be the $(3,3)$ complete intersection $X_0$ of Section \ref{1}.
The point is that $X\cap E$ is the intersection of two $(2,1)$ divisors in $\PP(V)\times\PP(W)$, which is generically a section over $\PP(V)$ but which has fibre $\PP^1$ over 12 points of $\PP(V)$.
In other words it is $\Bl_{\text{12}}\PP(V)$ and the projection to $\PP^5$ blows this back down to $\PP(V)$.	
\end{rmk}

Using the projective bundle structure (\ref{projb}, \ref{linebun}) we see that $\mathcal H$ is an element of the linear system
$$
\left|\O_\rho(2)\otimes\rho^*\O_{\PP(W)}(3)\boxtimes\O_{\P^1}(1)\right|
$$
on the $\PP^3$-bundle $Z\times\P^1\Rt{\rho\times\!1}\PP(W)\times\P^1$.
Since this has degree two on the fibres, \emph{$\H$ is a quadric fibration over $\thiswasS$}.

Via the isomorphism \eqref{NW} we think of $\mathcal H$ as being defined by a section of $S^2(\undy{V}^*\oplus\O_{\PP(W)}(1))(1,1)$.
That is, a quadratic form on the fibres of $\undy{V}\oplus\O_{\PP(W)}(-1)$, twisted by $\O_{\thiswasS}(1,1)$.
It is generically of rank 4 on the fibres, but drops to rank $\leq 3$ on the divisor $D$ where its determinant in
$$
\left( \Lambda^4\left(\undy{V}^*\oplus\O_{\PP(W)}(1)\right)\right)^{\otimes2}(4,4) \, \cong \, \O_{\thiswasS}(6,4)
$$
vanishes.
It further drops to rank $2$ at
$$
4\big(c_1(E\otimes N)c_2(E\otimes N)-c_3(E\otimes N)\big)
$$
points \cite{HaTu}, where $E$ is the bundle $\undy{V}^*\oplus\O_{\PP(W)}(1)$ and $N$ is the $\Q$-line bundle $\O_{\thiswasS}(1/2,1/2)$.
With some work, this can be computed to be 66.
At these 66 points, the divisor $D$ necessarily has an ODP.

\medskip
We now have all the ingredients in place to cook up our desired equivalence.
As $\H \to \thiswasS$ is a quadric fibration, there is a corresponding \emph{even Clifford algebra} sheaf $\sC_0$ on $\thiswasS$ -- see \cite[Section 1.5]{ABB} for its definition and see also \cite[Proposition A.1]{ABB}, where it is proved that $\sC_0$ is isomorphic to $\mathfrak{B}_0$ of \cite[Section 3]{kuquad}.
The relative version of Kuznetsov's HPD for quadrics found in \cite{ABB} provides the first step of our desired equivalence.
\begin{lem}
        There is an equivalence
        \begin{equation} D(X) \cong D\left(\thiswasS, \sC_0\right) \end{equation}
        where by the latter we mean the bounded derived category of right coherent $\sC_0$-modules.
\end{lem}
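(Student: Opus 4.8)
The plan is to identify the equivalence $D(X) \cong D(\thiswasS, \sC_0)$ as the relative version of Kuznetsov's homological projective duality for quadric fibrations, as developed in \cite{ABB}. The essential geometric input is the observation made just before the statement: the universal hypersurface $\H$, realised inside the $\PP^3$-bundle $Z \times \PP^1 \to \thiswasS$ as a divisor of relative degree $2$, is a quadric surface fibration over $\thiswasS$, and the subvariety $X$ of \eqref{Xdef} is the baselocus of the pencil $\langle s_0, s_\infty\rangle$ cutting out $\H$. So $X$ and $\H \to \thiswasS$ stand exactly in the relationship ``baselocus of a pencil versus universal hypersurface'' studied in Proposition \ref{prop} and its quadratic refinement.

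First I would recall the content of \cite[Theorem 4.3 or the relevant HPD statement]{ABB}: for a flat quadric fibration $q\colon \H \to B$ of relative dimension $m$ (here $m = 2$, with fibres quadric surfaces in $\PP^3$), sitting inside a $\PP(\E)$-bundle for a rank-$(m+2)$ bundle $\E$ on $B$, there is a semi-orthogonal decomposition of $D(\H)$ whose interesting component $\A_{q}$ is equivalent to $D(B, \sC_0)$, the derived category of right $\sC_0$-modules, where $\sC_0$ is the even Clifford algebra attached to the quadratic form. Combined with \cite[Proposition A.1]{ABB} identifying $\sC_0$ with Kuznetsov's $\mathfrak B_0$ of \cite{kuquad}, this gives $\A_\H^{\mathrm{quad}} \cong D(\thiswasS, \sC_0)$. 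The second ingredient is to reconcile this ``quadric-fibration interesting part'' $\A_\H^{\mathrm{quad}}$ with the HPD-style description of $D(X)$: just as in the proof of Proposition \ref{prop}, one writes $\H = \Bl_X Z$ (equation \eqref{unihyp}) with exceptional divisor $E' \cong X \times \PP^1$, and uses Orlov's blow-up formula together with a sequence of mutations — tensoring the extra exceptional objects past each other by $K_\H^{-1}$ and left-mutating the copy of $D(X)$ into the appropriate right orthogonal — to realise $D(X)$ as the right orthogonal inside $D(\H)$ of the exceptional collection pulled back from $\PP(\E) \times \PP^1$. The key point is that \emph{this right orthogonal is precisely the component $\A_\H^{\mathrm{quad}}$} appearing in the quadric-fibration decomposition, because the two exceptional collections (the blow-up one and the quadric-fibration one) span the same subcategory — this is exactly the compatibility that makes HPD for pencils and HPD for quadric fibrations two faces of the same statement. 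Chaining the two equivalences yields $D(X) \cong D(\thiswasS, \sC_0)$.

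In carrying this out the steps, in order, would be: (i) verify flatness of $\H \to \thiswasS$ and that the quadratic form data is exactly $S^2(\undy{V}^* \oplus \O_{\PP(W)}(1))(1,1)$ twisted appropriately, as already recorded via \eqref{NW}, so that \cite{ABB} applies with $B = \thiswasS$ and $\E = \undy{V} \oplus \O_{\PP(W)}(-1)$ (up to the twist by $N = \O(1/2,1/2)$); (ii) quote the semi-orthogonal decomposition of \cite{ABB} to get $\A_\H^{\mathrm{quad}} \cong D(\thiswasS, \sC_0)$; (iii) run the Orlov/mutation argument on $\Bl_X Z = \H$, word for word parallel to the proof of Proposition \ref{prop} but now with the two-dimensional base $\thiswasS$ in place of $\PP^1$, to get $D(X) \cong \A_\H^{\mathrm{quad}}$; (iv) assemble.

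The main obstacle, I expect, is step (iii) together with the identification of the two components in step (ii)--(iii): one must check that the semi-orthogonal decomposition of $D(\H)$ coming from the blow-up structure and the one coming from the quadric-fibration structure have the \emph{same} nontrivial summand, i.e.\ that the ``noncommutative'' pieces agree on the nose and not merely abstractly. This is a bookkeeping exercise in mutations — keeping track of the twists $\O_\rho(k) \otimes \rho^*\O_{\PP(W)}(\ell) \boxtimes \O_{\PP^1}(j)$ and using \eqref{linebun} to translate between the $\PP^5$-picture and the $\PP(W) \times \PP^1$-picture — but it is the place where genuine care is needed, and one should probably cite \cite{kuquad, ABB} for the precise form of the quadric-fibration decomposition rather than reprove it. A secondary subtlety is the half-integral twist $N$: one has to make sense of the even Clifford algebra of a form valued in a line bundle that is only a square root of an honest line bundle, but this is handled in \cite{ABB} and causes no essential difficulty here since $\O_{\thiswasS}(1,1)$ is a square in $\Pic$ after pulling back appropriately, or one simply works with the form valued in $\O_{\thiswasS}(1,1)$ directly as in \cite{kuquad}.
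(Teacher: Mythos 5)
Your overall strategy — invoke the relative HPD for quadric fibrations from \cite{ABB}, using the observation that $\H\to\thiswasS$ is a quadric surface fibration with $X$ as the base locus of the defining pencil — is exactly the paper's approach. However, there are two issues with how you plan to carry it out.

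First, you overcomplicate. You propose to (a) cite \cite{ABB} only for the decomposition $D(\H) \supset \A_\H^{\mathrm{quad}} \cong D(\thiswasS,\sC_0)$ of the quadric fibration, and then (b) separately run an Orlov blow-up/mutation argument à la Proposition \ref{prop} to identify $D(X)$ with $\A_\H^{\mathrm{quad}}$, flagging the compatibility of the two decompositions as ``the main obstacle.'' That obstacle is real, but it is precisely the content of \cite[Theorem 2.16 and Theorem 2.19(2)]{ABB}: their relative HPD theorem already outputs the equivalence $D(X)\cong D(\thiswasS,\sC_0)$ directly, with no further mutations required on your end. The paper simply cites Theorem 2.19(2) and translates notation ($Y = \P(W)$, $E = \undy{V}(1)\oplus\O$, $S = \thiswasS$, $Q=\H$, $m=2$, $n=4$). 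Trying to re-derive the compatibility yourself would essentially mean reproving their theorem.

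Second, and more importantly, you have not identified the actual hypothesis that needs to be verified to apply \cite{ABB}. It is not flatness of $\H\to\thiswasS$. Theorem 2.16 of \cite{ABB} requires that the $2$-dimensional subspace of sections cutting out the pencil be genuinely $2$-dimensional ``fibrewise,'' i.e.\ that the induced map
$$
\O_{\P(W)}^{\oplus 2}\xrightarrow{\ (s_0,s_\infty)\ }\rho_*\pi^*\O(3)(-E)
$$
have rank $2$ at every point of $\P(W)$. This is the one nontrivial check in the proof, and it is where the genericity of the pencil is actually used. The paper verifies it by composing with the projection to the summand $(S^2\undy{V}^*)(1)$ of \eqref{NW}: for generic $s_0,s_\infty$, their components in $S^2V^*\otimes W^*$ define two maps $W\to S^2V^*$ whose images meet only at $0$ (a dimension count, since $\dim S^2V^*=2\dim W$), so at each point of $\P(W)$ one gets two \emph{distinct} quadratic forms. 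Without this verification, your argument is missing the step that makes the theorem apply, so as written the proposal has a genuine gap even if the citation is to the right theorem.
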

\begin{proof}
        This follows from \cite[Theorem 2.19 (2)]{ABB}, which fits in the general framework of relative HPD \cite[Theorem 2.16]{ABB}.
        To use their notation: the base scheme $Y$ is our $\P(W)$, the vector bundle $E$ is $\undy{V}(1)\oplus \O_{\P(W)}$, the base of the family of quadrics $S$ is our $\thiswasS$, the base locus $X$ is our $X$, the total family of quadrics $Q$ is our $\H$, $m = 2$ and $n = 4$.
        
        Our pencil $\langle s_0, s_\infty\rangle$ corresponds to a two-dimensional subspace of the global sections of $\pi^*\O(3)(-E)$, which we identify with the bottom-left corner of \eqref{lineb}.

To satisfy the assumptions of \cite[Theorem 2.16]{ABB}, we need to check that the map
\begin{equation}\label{ABBcheck}
\begin{tikzcd}
	\O_{\P(W)}^{\oplus 2} \ar{r}{(s_0,s_\infty)} & \rho_* \pi^* \O(3)(-E)
\end{tikzcd}
\end{equation}
defined by the pencil has rank 2 at every point.
(In the notation of \cite{ABB}, $L = \O_{\PP(W)}(-3)^{\oplus 2}$, $V = S^2 \left(\undy{V}(1) \oplus \O_{\PP(W)}\right)$.) It is sufficient to show that the composition of \eqref{ABBcheck} with the projection $\rho_* \pi^*\O(3)(-E)\to
\big(S^2 \undy{V}^* \big)(1)$ to the first summand of \eqref{NW} has rank 2 at every point.
        
We see this for generic $s_0, s_\infty$ as follows.
Consider their projections to the first term $S^2V^* \otimes W^*$ of the bottom-left corner of \eqref{lineb}.
They define two elements of $\Hom(W, S^2V^*)$ whose images intersect only in $0$ (by genericity and the observation that $\dim S^2V^*=2\dim W$).
That is, for each point of $\PP(W)$, we get two quadratic forms on $V$ (twisted by $\O(1)$) which are \emph{distinct}.
This is the required condition.
\end{proof}

To make the right hand side of the equivalence (more) geometric, we use the work of \cite{Kuf} (we thank Alexander Kuznetsov for highlighting this paper).
If $s \in \thiswasS$, we know what the fibre of $\H$ over it looks like:
\begin{enumerate}
        \item generically it is a smooth quadric $\H_s \cong \P^1\times \P^1$;
        \item over the smooth locus of $D$, $\H_s$ is a quadric cone;
        \item over the ODPs of $D$, $\H_s$ is the union of two planes intersecting in a line.
\end{enumerate}
All the fibres contain lines and we can consider the moduli space parameterising them.
Let $F \to \thiswasS$ be relative Fano scheme of lines of $\H \to \thiswasS$ (the derived category of which was studied in \cite{Kuf}).
Once again, the fibres of $F$ over $\thiswasS$ are explicit:
\begin{enumerate}
        \item if $s \notin D$, $F_s$ is the disjoint union of two smooth lines;
        \item for $s$ in the smooth locus of $D$, $F_s$ is topologically a single smooth conic;
        \item for $s$ a singular point of $D$, $F_s$ is the union of two planes $\Sigma^{+}_s, \Sigma^{-}_s$ intersecting at a point.
\end{enumerate}
Away from the singularities of $D$ we see that $F$ is a $\P^1$-bundle over a double cover of $\thiswasS$.
More precisely, let $F \to Y_0 \to \thiswasS$ be the Stein factorisation of $F \to \thiswasS$.
The morphism $Y_0 \to \thiswasS$ is a double cover branched at $D$, while $F \to Y_0$ is generically a $\P^1$-bundle.

Let us now choose a plane $\Sigma_s \in \{\Sigma^+_s, \Sigma^-_s\}$ in each fibre over the singular locus of $D$.
In \cite[Proposition 4.4]{Kuf} it is shown that the flip $F'$ in all the planes $\Sigma_s$ factors as a composition
$$F' \to Y \to Y_0$$
where $Y \to Y_0$ is a small resolution and $F' \to Y$ is a $\P^1$-bundle.
In Lemma \ref{nick} we show that one \emph{cannot} choose the $\Sigma$s so that $Y$ is a projective variety.

Let $\alpha \in H^2_{\text{\'et}}(Y,\O^\times_Y)$ be the Brauer class coming from $[F'] \in H_{\text{\'et}}^1(Y,\PGL_2)$.
It is shown in \cite[Lemma 5.7]{Kuf} that there exists an Azumaya algebra $\mathscr{A}$ on $Y$, representing the class $\alpha$, such that $\sigma_* \mathscr{A} \cong \sC_0$, where $\sigma$ is the composition $Y \to Y_0 \to \thiswasS$ (recall once again \cite[Proposition A.1]{ABB}).
Kuznetsov goes on to show \cite[Proof of Theorem 1.1]{Kuf} that $\sigma^*$ induces an equivalence between the derived categories of $\mathscr{A}$-modules and $\sC_0$-modules. 
As a consequence, we now have our desired equivalence:
\begin{equation}
        D(X) \cong D\left(\thiswasS, \sC_0\right) \cong D(Y,\mathscr{A}) \cong D(Y,\alpha)
\end{equation}
where the last term is the bounded derived category of coherent $\alpha$-twisted sheaves on $Y$.

\begin{rmk}
        An alternative approach to the equivalence could be realised by mimicking \cite{AddThesis, Add}.
        We can construct the small resolution $Y$ as a moduli space of spinor sheaves on the fibres of $\H$ as follows.
        
        Let $U$ be an analytic open subset of $\thiswasS$, small enough so we can pick a section $s_U$ of the quadric bundle passing through only smooth points of the fibres.
        We define $Y|_U$ to be the moduli space of lines in the quadric fibres intersecting the section $s_U$.
        Generically, the fibre is a smooth quadric $\P^1\times \P^1$ and there are two lines through the basepoint.
        So $Y|_U \to U$ is a double cover.
        Over those fibres where the quadric drops rank by 1, there is only one line, so the double cover branches.
        Over those fibres where the quadric drops rank by 2, there is a $\P^1$ of lines.
        This $\P^1$ is what gives the small resolution $Y \to Y_0$.
        
        Over an open set containing a corank-2 quadric we get a fibre which is the union of two planes.
                Changing which plane the section goes through flops $Y$ from one small resolution of $Y_0$ to the other. 
                
        The spinor sheaf is then defined to be the ideal sheaf of the line in the quadric fibre.
        Over overlaps the section changes and so the line with it, but the ideal sheaf remains isomorphic away from the corank-2 quadric fibres.
        Since there are only finitely many of these fibres, we can choose our cover so that they do not lie on overlaps.
        With these choices, the $Y|_U$ glue up uniquely, while the spinor sheaves glue modulo scalars giving a universal sheaf twisted by a Brauer class (cf.~\cite{AddThesis}).
\end{rmk}

Finally, just as in \cite{AddThesis} we have the following.
\begin{lem}\label{nick} The threefold $Y$ is non-K\"ahler (and a fortiori non-projective).
\end{lem}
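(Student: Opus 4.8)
The plan is to run the standard homological obstruction to a Kähler small resolution of a nodal threefold: I will show that the $66$ exceptional curves of $Y\to Y_0$ are torsion in $H_2(Y;\Z)$, which is incompatible with a Kähler class.

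First I would note that $Y$ is Moishezon — $Y_0$ is projective, being a double cover of $\thiswasS$, and $Y\to Y_0$ is a proper bimeromorphic morphism — so that in particular $Y$ is a compact complex manifold, rational Poincaré duality applies, and $b_4(Y)=b_2(Y)=2$ by Section \ref{1}.

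Then I would consider the composite $\sigma\colon Y\to Y_0\to\thiswasS$, which is surjective and generically two-to-one. Each exceptional curve $C_i$ ($i=1,\dots,66$) lies over one of the $66$ nodes of $Y_0$, and that node maps to a single point of $\thiswasS$; hence $\sigma$ contracts $C_i$, so $\sigma_*[C_i]=0$ in $H_2(\thiswasS;\Z)$. Now comes a transfer argument: the projection formula gives $\sigma_!\circ\sigma^\ast=(\deg\sigma)\cdot\mathrm{id}=2\cdot\mathrm{id}$ on $H^\ast(\thiswasS;\Q)$, so $\sigma^\ast$ is injective on rational cohomology; since $H^4(\thiswasS;\Q)$ and $H^4(Y;\Q)$ are both $2$-dimensional, $\sigma^\ast$ is an isomorphism in degree $4$. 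Writing the Poincaré dual of $C_i$ as $\sigma^\ast\gamma$ and applying $\sigma_!$ gives $0=\sigma_!(\mathrm{PD}\,[C_i])=\sigma_!\sigma^\ast\gamma=2\gamma$, so $\mathrm{PD}\,[C_i]=0$ and $[C_i]$ is torsion.

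Finally, were $Y$ to carry a Kähler form $\omega$ we would have $\int_{C_i}\omega>0$, since $C_i\cong\P^1$ is a compact complex curve — contradicting the vanishing of $\int_{C_i}\omega$ forced by $[C_i]$ being torsion. Hence $Y$ is non-Kähler, and a fortiori non-projective; the argument does not see the choices of the planes $\Sigma_s$, because $b_2$ is a flop invariant. The only genuinely non-formal input is $b_2(Y)=2$ — i.e.\ that the $66$ nodes contribute nothing new to $H^2$ of the small resolution, the ``zero defect'' statement — and checking this, rather than the transfer-and-duality bookkeeping, is where the real work and the geometry of $D$ lie. A variant closer in spirit to \cite{AddThesis} would instead extract an explicit relation $\sum a_i[C_i]=0$ with non-negative coefficients directly from the $\P^1$-bundle $F'\to Y$ and the projective model $F$; fixing those coefficients would then be the main obstacle.
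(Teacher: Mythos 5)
Your transfer/Gysin computation in the third paragraph is a clean way to finish \emph{once one already knows} $b_2(Y)=2$: the paper instead pairs the exceptional curves $C_i$ against $H^2$-classes pulled back from $Y_0$, but both arguments deduce from $b_2(Y)=2$ that the $C_i$ are rationally null-homologous, which is incompatible with a K\"ahler form. However, you have a genuine gap, which you flag at the end but do not close: you take $b_4(Y)=b_2(Y)=2$ as an input ``by Section~\ref{1}'', whereas Section~\ref{1} only \emph{announces} the Betti numbers. The proof of Lemma~\ref{nick} is precisely where $b_2(Y)=2$ gets established, and that is the substantive content of the lemma. Without it your argument is unsupported: if $b_4(Y)>2$ then $\sigma^*$ is injective but not surjective in degree $4$, $\mathrm{PD}\,[C_i]$ need not lie in the image of $\sigma^*$, and you cannot conclude that $[C_i]$ is torsion.

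What the paper actually does is derive $b_2(Y)=2$ from the derived-equivalence machinery already in place, not from any direct geometric inspection of the branch divisor $D$. Concretely: the $\P^1$-bundle $F'\to Y$ represents the Brauer class $\alpha$, so Bernardara's semiorthogonal decomposition gives $D(F')\cong\langle D(Y),D(Y,\alpha)\rangle\cong\langle D(Y),D(X)\rangle$, whence $\dim H^{\mathrm{even}}(F',\Q)=\dim H^{\mathrm{even}}(Y,\Q)+\dim H^{\mathrm{even}}(X,\Q)$. On the other hand, Leray--Hirsch for $F'\to Y$ (applicable since $c_1(K_{F'/Y})$ restricts to a generator on the fibres, even though there is no global relatively ample class) gives $\dim H^{\mathrm{even}}(F',\Q)=2\dim H^{\mathrm{even}}(Y,\Q)$; comparing yields $b_2(Y)=b_2(X)$. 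Finally $b_2(X)=2$: a smooth $(3,3)$ complete intersection has $b_2=1$ by Lefschetz, and the small resolution $\Bl_P X_0$ adds the defect of the twelve nodes, which is $1$ because the plane $\PP(V)$ gives exactly one relation among the vanishing cycles. So the ``zero defect on the $Y$ side'' fact you correctly identify as the crux is an \emph{output} of the Bernardara plus Leray--Hirsch comparison together with a defect computation performed on the $X$ side; that comparison is the missing step your proposal would need to supply. Your closing observation that $b_2$ is a flop invariant, so the choice of $\Sigma_s$ is irrelevant, is correct, but it does not compute $b_2$.

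Two smaller points. First, your sentence deriving $b_4(Y)=b_2(Y)$ from Poincar\'e duality is fine, but it is logically idle once $b_2(Y)=2$ is assumed; the content is in the assumption. Second, your ``variant closer in spirit to \cite{AddThesis}'' would indeed be an honest alternative, but as you say, exhibiting an explicit relation $\sum a_i[C_i]=0$ with signs under control is exactly where that route gets hard; the paper's appeal to $b_2(X)$ via the derived equivalence sidesteps it.
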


\begin{proof} We thank Nick Addington for the following argument.

The Brauer class $\alpha$ is represented by the $\P^1$-bundle $F' \to Y$.
By \cite{Be} we have the semiorthogonal decomposition
\begin{align*}
        D(F')   &\cong \left\langle D(Y), D(Y,\alpha) \right\rangle \\
                        &\cong \left\langle D(Y), D(X) \right\rangle.
\end{align*}
Hence,
\beq{dim}
        \dim H^{\mathrm{even}}(F',\Q) = \dim H^{\mathrm{even}}(Y, \Q) + \dim
        H^{\mathrm{even}}(X, \Q).
\eeq
On the other hand, $c_1(K_{F'/Y})$ generates the cohomology of the fibre $\PP^{1}$ over $\Q$, so we can apply the Leray-Hirsch theorem to deduce that
$$
H^*(F', \Q)\ \cong\ H^*(Y, \Q) \otimes H^*(\P^1, \Q)
$$
as vector spaces.
In particular, $\dim H^{\mathrm{even}}(F',\Q)=2\dim H^{\mathrm{even}}(Y, \Q)$. Combined with \eqref{dim} gives $\dim H^{\mathrm{even}}(Y, \Q)\ =\ \dim H^{\mathrm{even}}(X, \Q)$ or, equivalently,
$$
b_2(Y)\,=\,b_2(X).
$$
Now a smooth (3,3) complete intersection in $\PP^5$ has $b_2=1$ by the Lefschetz hyperplane theorem.
Degenerating to $X_0$ with ODPs only changes $b_3$, and passing to the small resolution $X$ by blowing up the plane adds the defect to $b_2$.
Here the defect is the number of relations in $H_3$ between the vanishing cycles of $X_0$.
In this case, there is only one given by the plane $\PP^2$.
Hence
$$
b_2(X)=2.
$$
Now both $Y_0$ and $Y$ have two independent $H^2$ classes pulled back from $\thiswasS$.
Since $b_2(Y)=2$ we see that it has no extra classes; its $H^2$ is pulled back from $Y_0$.
In particular, the exceptional curves of the small resolution are all trivial in (co)homology, so $Y$ cannot be K\"ahler.
\end{proof}

\section{Second example} \label{odpeg}

For the second example, we work in the blow up of $\PP^5$ in a single point $0$, with exceptional divisor $e$:
\begin{equation}\label{Pdef}
        \begin{tikzcd}
                P:= \Bl_0\P^5 \ar{d} \ar[hookleftarrow]{r} & e  \\
                \P^5 & 
        \end{tikzcd}
\end{equation}
We consider divisors in the linear system of\,\footnote{We are suppressing the pullback maps from the notation.}
$$
K_P^{-1/2}\,=\,\O(3)(-2e).
$$
These are the proper transforms of cubic 4-folds in $\PP^5$ with an ODP at $0$. Picking a generic pencil
$$
\PP^1\subset\big|\O(3)(-2e)\big|
$$
of such divisors, their baselocus is a smooth Calabi-Yau 3-fold
\beq{X2def}
X\subset P=\Bl_0\PP^5.
\eeq

We use the universal hypersurface $\H\subset P\times\PP^1$ to define the dual Calabi-Yau threefold $Y$.

\begin{lem}
There is an isomorphism
\beq{birat}
\H\ \cong\ \Bl_Y(\PP^4\times\PP^1),
\eeq
where $Y\subset\PP^4\times\PP^1$ is the Calabi-Yau 3-fold intersection of a $(2,1)$ and a $(3,1)$ divisor.
\end{lem}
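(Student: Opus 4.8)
The plan is to turn the rational projection $\PP^5\dashrightarrow\PP^4$ away from $0$ into a morphism on $P=\Bl_0\PP^5$ and read off the blow-up structure of $\H$ from it. Projecting from $0$ onto the $\PP^4$ makes $\rho\colon P\to\PP^4$ a $\PP^1$-bundle $\PP(\mathcal E)$, with $\mathcal E$ of rank $2$ and $e$ one of its two tautological sections; hence $\tau:=\rho\times\id_{\PP^1}\colon P\times\PP^1\to\PP^4\times\PP^1$ is again a $\PP^1$-bundle, and restricting $\tau$ gives a morphism $\H\to\PP^4\times\PP^1$, which I claim is the blow-up along the stated $Y$. Writing $h,H$ for the pullbacks to $P$ of the hyperplane classes of $\PP^5$ and $\PP^4$, we have $e=h-H$, so $\O_P(3)(-2e)=\O_P(h)\otimes\rho^*\O_{\PP^4}(2)$, which restricts to $\O(1)$ on each $\rho$-fibre; as the $\O_{\PP^1}(1)$ factor is trivial there, $\H$ meets every fibre of $\tau$ in a length-one subscheme. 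So $\tau|_\H$ is birational: an isomorphism over the open set where this subscheme is a single reduced point, while over the closed locus $Y\subset\PP^4\times\PP^1$ where the whole fibre lies in $\H$ its fibre is a $\PP^1$.

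To identify $Y$, push forward along $\rho$. The sequence
$$
0\ \To\ \O_{\PP^4}(3)\ \To\ \rho_*\big(\O_P(3)(-2e)\big)\ \To\ \O_{\PP^4}(2)\ \To\ 0
$$
splits, the last map being restriction to $e\cong\PP^4$ (passing to the projectivised tangent cone at $0$) and the first being multiplication by the tautological section $\zeta$ cutting out $e$. Thus a cubic fourfold with an ordinary double point at $0$, viewed on $P$, has the form $z_0\,G(w)+\zeta\,C(w)$ for a quadratic form $G$ and a cubic form $C$ on $\PP^4$, so our pencil is $\big\langle(G_0,C_0),(G_1,C_1)\big\rangle$ and $\H=\{x(z_0G_0+\zeta C_0)+y(z_0G_1+\zeta C_1)=0\}$. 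Parametrising the $\tau$-fibre over a point $(p,[x:y])$ by $[s:t]$ — so that $z_0=s$ and $\zeta=t$ along it — the equation of $\H$ restricts to $\big(xG_0+yG_1\big)(p)\,s+\big(xC_0+yC_1\big)(p)\,t$, which vanishes identically exactly when
$$
xG_0(p)+yG_1(p)=0\quad\text{and}\quad xC_0(p)+yC_1(p)=0 .
$$
Hence $Y$ is the intersection of the $(2,1)$-divisor $\{xG_0+yG_1=0\}$ and the $(3,1)$-divisor $\{xC_0+yC_1=0\}$ in $\PP^4\times\PP^1$; since $K_{\PP^4\times\PP^1}=\O(-5,-2)$ and $(2,1)+(3,1)=(5,2)$, adjunction gives $K_Y\cong\O_Y$, so $Y$ is a Calabi--Yau threefold, smooth for the generic pencil (equivalently, as we have arranged, when $\H$ is smooth).

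It remains to promote the birational contraction $\tau|_\H$ to an isomorphism $\H\cong\Bl_Y(\PP^4\times\PP^1)$ over $\PP^4\times\PP^1$. The point is that $\tau|_\H$ exhibits $\H$ as the zero locus, inside $\PP(\mathcal E\boxtimes\O_{\PP^1})=P\times\PP^1$, of a morphism from the rank-two bundle $\mathcal E\boxtimes\O_{\PP^1}$ to a line bundle, and $Y$ is precisely its degeneracy locus. Over an open $U\subset\PP^4\times\PP^1$ where $\mathcal E\boxtimes\O_{\PP^1}\cong\O_U^{\oplus 2}$, this morphism is a pair $(f,g)$ with $Y\cap U=\{f=g=0\}$, and the corresponding part of $\H$ is $\{f v_1+g v_2=0\}\subset U\times\PP^1$, which is canonically $\Bl_{Y\cap U}U$ provided $f,g$ is a regular sequence — i.e.\ provided $Y$ has codimension $2$; these local identifications glue. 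I expect this final step to be the main obstacle: not the gluing, but checking that $Y$ really has the expected codimension, so that the incidence variety is irreducible and equals the Rees-algebra blow-up rather than acquiring spurious components. That follows from genericity of the pencil — equivalently from the smoothness of $\H$, which we imposed — and everything else is bookkeeping: pinning down the class of $\H$ on $P\times\PP^1$ and tracking the splitting $\rho_*\O_P(3)(-2e)\cong\O_{\PP^4}(2)\oplus\O_{\PP^4}(3)$ into tangent-cone and residual-cubic parts.
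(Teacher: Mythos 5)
Your proof is correct and takes essentially the same route as the paper: view $P=\Bl_0\PP^5$ as a $\PP^1$-bundle over $\PP^4$, observe that $\O_P(3)(-2e)$ has degree $1$ on the $\rho$-fibres so $\rho\times\id$ restricted to $\H$ is a birational contraction, and compute $\rho_*\big(\O_P(3)(-2e)\big)\cong\O(2)\oplus\O(3)$ to identify the contracted locus as a $(2,1)\cap(3,1)$ complete intersection. Your treatment is a touch more explicit in two places that the paper leaves implicit: the coordinate description of a cubic with an ODP at $0$ as $z_0G(w)+C(w)$, and the local model argument that the incidence $\{fv_1+gv_2=0\}$ over a codimension-$2$ regular $Y$ really is the blow-up of $Y$ (the paper simply asserts that $\rho|_\H$ contracts $\PP^1$s over the zero locus of $\rho_*s_\H$). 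Both additions are sound and compatible with the paper's computation, which reaches the same $\O(2,1)\oplus\O(3,1)$ via $\rho_*\O_\rho(1)=\big(\O(1,0)\oplus\O\big)^*$.
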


\begin{proof}
Consider the variety of lines in $\PP^5$ through the point $p$. This is a copy of $\PP^4$, over which $P$ is therefore a $\PP^1$-bundle:
\beq{p1bdl}
P=\PP\big(\O_{\PP^4}(1)\oplus\O_{\PP^4}\big)\Rt{\rho}\PP^4.
\eeq
Crossing with $\P^1$, we have the composition
\begin{center}
\begin{tikzcd}
        \H \ar[hook]{r} \ar[bend left = 15, start anchor = 50, end anchor = 150]{rr}{\rho|_\H} & P \times \P^1 \ar{r}{\rho} & \P^4 \times \P^1
\end{tikzcd}
\end{center}
which we will show is the claimed blow up (here we mildly abuse notation and write $\rho$ for $\rho \times 1$). \medskip 

The divisor $\H\subset P\times\PP^1$ is the zero locus of a section $s\_\H$ of the line bundle
\beq{lineconf}
\O(3)(-2e)\boxtimes\O_{\PP^1}(1).
\eeq
We need to express this in terms of the geometry of the bundle \eqref{p1bdl}.
Considering $P$ as the projectivisation of the vector bundle $\O(1)\oplus\O\to\PP^4$, it carries a tautological line bundle $\O_\rho(-1)$.
Its dual has a canonical section $(0,1) \in H^0(\PP^4,\O(-1)\oplus\O)$ whose zero locus is the divisor $e$, so
\beq{rholine}
\O_\rho(1)\ \cong\ \O(e).
\eeq

Considering $P$ as the blow up of $\PP^5$ in the point $0 \in \P^5$, which is the baselocus of the lines parameterised by $\PP^4$, we also have\footnote{To avoid ambiguity in our notation, we do \emph{not} suppress the pullback map $\rho^*$ even as we continue to omit the others.}
\beq{data} 
\rho^*\O(1)=\O(1)(-e).
\eeq
Therefore by \eqref{lineconf}, \eqref{rholine} and \eqref{data}, $\H$ lies in the linear system of
\beq{lines}
K_{P\times\PP^1}^{-1/2}\ \cong\ \O(3,1)(-2e)\ \cong\ \rho^*\O(3,1)(e)\ \cong\ \rho^*\O(3,1)\otimes\O_\rho(1).
\eeq
This has degree 1 on the $\PP^1$-fibres of $\rho$, so each fibre intersects $\H$ in either a point or the whole line.

Therefore $\rho|_\H$ is a birational map to $\P^4 \times \P^1$, and it only contracts $\P^1$s over the zero locus of $\rho_*s_\H$, which is a section of
\begin{align}\label{isoref}
        \rho_* \left( \rho^* \O(3,1))\otimes \O_\rho(1) \right)
        &= \O(3,1) \otimes \rho_* \O_\rho(1) \\ \notag
        &= \O(3,1) \otimes \big( \O(1,0) \oplus \O \big)^*
        = \O(2,1) \oplus \O(3,1).
\end{align}
This locus is thus a complete intersection of $(2,1)$ and $(3,1)$ divisors, as claimed.
\end{proof}

\begin{thm}\label{thm2}
There is a derived equivalence between $X$ \eqref{X2def} and $Y$ \eqref{birat},
$$
D(X)\ \cong\ D(Y).
$$
\end{thm}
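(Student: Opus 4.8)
I would mimic the proof of Proposition~\ref{prop} — Orlov's blow-up formula followed by a chain of mutations — but run it in both directions, using that $\H$ now carries \emph{two} blow-up structures: the projection $\H\subset P\times\P^1\to P$, and the projection $\rho|_\H\colon\H\to\P^4\times\P^1$ of the previous Lemma. Fibrewise over the pencil this is the ODP-cubic picture of Section~\ref{back}: for $t\in\P^1$ the fibre $\H_t\subset P$ is the smooth proper transform $\Bl_0H_t\cong\Bl_{S_t}\P^4$ of the cubic fourfold $H_t$ with its node, and $Y\to\P^1$ is precisely the resulting family $\{S_t\}$ of $(2,3)$ K3 surfaces. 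The theorem upgrades this fibrewise equivalence to a global one.

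On the $Y$-side the only input is Orlov's formula: $\rho|_\H$ is the blow-up of the codimension-two complete intersection $Y$, so $D(\H)=\big\langle\rho^*D(\P^4\times\P^1),\,\Phi_Y D(Y)\big\rangle$, where $\Phi_Y$ is the Orlov embedding and $\rho^*D(\P^4\times\P^1)=\big\langle\rho^*\O(i,j):0\le i\le4,\,0\le j\le1\big\rangle$ is a rectangular exceptional collection of length $10$. On the $X$-side, the baselocus $X\subset P$ of the pencil is a smooth complete intersection of two sections of $\O(3)(-2e)$, of codimension two, so $\H\to P$ is the blow-up $\Bl_XP$ with exceptional divisor $X\times\P^1$, and Orlov gives $D(\H)=\big\langle\pi^*D(P),\,\Phi_X D(X)\big\rangle$. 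Feeding in the further structure $P=\Bl_0\P^5$ — so that $D(P)$ has a length-$10$ exceptional collection: Beilinson's $\O_{\P^5},\dots,\O_{\P^5}(5)$ together with four exceptional objects supported on the exceptional divisor $e$ — turns $\pi^*D(P)$ into an explicit exceptional collection on $\H$, line bundles together with a few objects supported on the preimage of $e$, again of length $10$: the same length as the collection on the $Y$-side.

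The heart of the matter is to connect the two decompositions by a chain of mutations of $D(\H)$ carrying $\big\langle\pi^*D(P),\,\Phi_X D(X)\big\rangle$ to $\big\langle\rho^*D(\P^4\times\P^1),\,\Phi_Y D(Y)\big\rangle$. As in Proposition~\ref{prop} this amounts to: right-mutating an initial block of the exceptional collection to the far end (which twists it by $K_\H^{-1}\cong\O(3,1)(-2e)|_\H$), left-mutating the $D(X)$-component past the remaining exceptional objects, and rearranging — all the while applying the line-bundle dictionary from the proof of the Lemma, namely $\O_\rho(1)\cong\O_\H(\hat e)$, $\rho^*\O(1,0)\cong\O_\H(1)(-\hat e)$, $\rho^*\O(0,1)\cong\O_\H(0,1)$, and the class of the exceptional divisor of $\Bl_Y(\P^4\times\P^1)$, where $\hat e$ denotes the pullback of $e$. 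Since mutations are autoequivalences of $D(\H)$ preserving semiorthogonal structure, such a chain identifies $\Phi_X D(X)$ with $\Phi_Y D(Y)$, whence $D(X)\cong D(Y)$.

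I expect the main obstacle to be exactly this matching of the two exceptional collections: one must execute a fairly long, explicit sequence of mutations while simultaneously keeping track of the hyperplane class of $\P^5$, the pencil direction $\P^1$, the exceptional divisor $e$ of $\Bl_0\P^5$, and the two exceptional divisors of the identifications $\Bl_XP\cong\H\cong\Bl_Y(\P^4\times\P^1)$, so that every twist and every point-object lands exactly where it must. Conceptually this is the relative (over $\P^1$), $\Bl_0$-corrected form of Kuznetsov's identification of $\A_H$ with $D(S)$ for a cubic fourfold $H$ with an ODP \cite{Cu}; the fibrewise statement is known, and all the work is in assembling the family-wide equivalence. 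As a consistency check, the two exceptional collections have equal length $10$ while $\H$ has finite-dimensional cohomology, which forces the numerical Grothendieck groups of $X$ and $Y$ to have the same rank — in line with the equal Betti numbers recorded in Section~\ref{1}.
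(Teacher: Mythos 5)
Your proposal is correct and takes essentially the same route as the paper: Orlov's blow-up formula applied to both $\H\cong\Bl_X P$ and $\H\cong\Bl_Y(\P^4\times\P^1)$, the length-$10$ exceptional collection on $D(P)=D(\Bl_0\P^5)$, and a chain of mutations on $D(\H)$ (using the dictionary $\O_\rho(1)\cong\O(\hat e)$, $\rho^*\O(1,0)\cong\O(1)(-\hat e)$) connecting the two semi-orthogonal decompositions. The paper's actual mutation sequence is somewhat more elaborate than your sketch — several rounds of local swaps and reorderings involving the $\O_e$-type objects are needed before the Serre-twist step — but you correctly identify all the ingredients and flag that this bookkeeping is where the work lies.
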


\begin{proof}
We again use the fundamental relation
\beq{HPD1}
\H\ \cong\ \Bl_X(P).
\eeq
By Orlov's theorem \cite{Or} this gives a semi-orthogonal decomposition
\beq{semi1}
D(\H)\ \cong\ \Langle D(P),D(X)\Rangle,
\eeq
where the first term is embedded by pullback and the second by the functor of pulling back to the exceptional divisor and then pushing forwards into the blow up \eqref{HPD1}.

The description $\H\cong\Bl_Y(\PP^4\times\PP^1)$ of \eqref{birat} gives a similar semi-orthogonal decomposition
\beq{semi2}
D(\H)\ \cong\ \Langle\rho^*D(\PP^4\times\PP^1),D(Y)\Rangle.
\eeq
We will mutate \eqref{semi1} into \eqref{semi2} to get the equivalence $D(X)\cong D(Y)$. Our method is motivated by \cite[Section 5]{Cu}, heavily modified. We leave all the elementary sheaf cohomology calculations to the reader.

We start with the following semi-orthogonal decomposition of $D(P)$,
$$
\Langle\O,\O(1),\O(2),\O(3),\O(4),\O(5),\O_e,\O_e(-e),\O_e(-2e),\O_e(-3e)\Rangle
$$
obtained from Orlov's theorem \cite{Or} applied to yet another blow up -- the original one \eqref{Pdef}. Right mutating $\O(3),\O(4),\O(5)$ past $\O_e,\O_e(-e)$ turns this into 
$$
\Langle\O,\O(1),\O(2),\O_e,\O_e(-e),\O(3)(-2e),\O(4)(-2e),\O(5)(-2e),\O_e(-2e),\O_e(-3e)\Rangle.
$$
Then we left mutate the last 5 terms to the front of the exceptional collection, thus tensoring them with $K_P=\O(-6)(4e)$ to yield
\begin{multline*}
D(P)=\Langle\O(-3)(2e),\O(-2)(2e),\O(-1)(2e),\O_e(2e),\O_e(e), \\
\O,\O(1),\O(2),\O_e,\O_e(-e)\Rangle.
\end{multline*}

Substituting this into \eqref{semi1} and right mutating the first 5 terms all the way to the end tensors them with $K^{-1}_\H=\O(3,1)(-2\ee)$, where $\ee$ is the \emph{total} transform of $e \subset P$ in $\Bl_X P$.
Thus we can write $D(\H)$ as
\begin{multline*}
\Langle\O,\O(1,0),\O(2,0),\O_\ee,\O_\ee(-\ebe), \\
D(X),\O(0,1),\O(1,1),\O(2,1),\O_\ee(0,1),\O_\ee(-\ebe)(0,1)\Rangle.
\end{multline*}

Now right mutate $D(X)$ to the end, $\O(2,0)$ past $\O_\ee$, and $\O(2,1)$ past $\O_\ee(0,1)$, to give
\begin{multline*}
\Langle\O,\O(1,0),\O_\ee,\O(2,0)(-\ebe),\O_\ee(-\ebe), \\
\O(0,1),\O(1,1),\O_\ee(0,1),\O(2,1)(-\ebe),\O_\ee(-\ebe)(0,1),D(X)\Rangle.
\end{multline*}
Next left mutate the $3^\mathrm{rd}$ term past the $2^\mathrm{nd}$, the $5^\mathrm{th}$ past the $4^\mathrm{th}$, the $8^\mathrm{th}$ past the $7^\mathrm{th}$, and the $10^\mathrm{th}$ past the $9^\mathrm{th}$:
\begin{multline*}
\Langle\O,\O(1,0)(-\ebe),\O(1,0),\O(2,0)(-2\ebe),\O(2,0)(-\ebe), \\
\O(0,1),\O(1,1)(-\ebe),\O(1,1),\O(2,1)(-2\ebe),\O(2,1)(-\ebe),D(X)\Rangle.
\end{multline*}
Observing that the $3^\mathrm{rd}$ and $4^\mathrm{th}$ terms are orthogonal, and the $8^\mathrm{th}$ and $9^\mathrm{th}$, we swap them to give
\begin{multline*}
\Langle\O,\O(1,0)(-\ebe),\O(2,0)(-2\ebe),\O(1,0),\O(2,0)(-\ebe), \\
\O(0,1),\O(1,1)(-\ebe),\O(2,1)(-2\ebe),\O(1,1),\O(2,1)(-\ebe),D(X)\Rangle.
\end{multline*}
Similarly the $4^\mathrm{th}$ and $5^\mathrm{th}$ terms are orthogonal to the $6^\mathrm{th},\,7^\mathrm{th}$ and $8^\mathrm{th}$, so we move them past and then left mutate $D(X)$ past the 4 terms to its left:
\begin{multline*}
\Langle\O,\O(1,0)(-\ebe),\O(2,0)(-2\ebe),\O(0,1),\O(1,1)(-\ebe), \\
\O(2,1)(-2\ebe),D(X),\O(1,0),\O(2,0)(-\ebe),\O(1,1),\O(2,1)(-\ebe)\Rangle.
\end{multline*}
Finally, we left mutate the 4 terms to the right of $D(X)$ to the front of the exceptional collection, thus tensoring them with $K_\H=\O(-3,-1)(2\ebe)$:
\begin{multline*}
\Langle\O(-2,-1)(2\ebe),\O(-1,-1)(\ebe),\O(-2,0)(2\ebe),\O(-1,0)(\ebe),\O, \\
\O(1,0)(-\ebe),\O(2,0)(-2\ebe),\O(0,1),\O(1,1)(-\ebe),\O(2,1)(-2\ebe),D(X)\Rangle.
\end{multline*}
Using \eqref{rholine} we can write this in terms of sheaves pulled back from $\PP^4\times\PP^1$:
\begin{multline*}
\Langle\rho^*\O(-2,-1),\rho^*\O(-1,-1),\rho^*\O(-2,0),\rho^*\O(-1,0),\O,\\
\rho^*\O(1,0),\rho^*\O(2,0),\rho^*\O(0,1),\rho^*\O(1,1),\rho^*\O(2,1),D(X)\Rangle.
\end{multline*}
We identify this with
\beq{semi3}
\Langle\rho^*D(\PP^4\times\PP^1),D(X)\Rangle
\eeq
by using the standard exceptional collection
\begin{multline*}
D(\PP^4\times\PP^1)=\Langle\O(-2,-1),\O(-1,-1),\O(-2,0),\O(-1,0),\O, \\ \O(1,0),
\O(2,0),\O(0,1),\O(1,1),\O(2,1)\Rangle.
\end{multline*}
Comparing \eqref{semi3} with \eqref{semi2} gives the equivalence $D(X)\cong D(Y)$.
\end{proof}

Nick Addington and Paul Aspinwall pointed out that in this example $X$ and $Y$ are birational. In fact we have the following. Recall the map $\rho\colon P\to\PP^4$ \eqref{p1bdl}.
 
\begin{prop}\label{wayprop}
The compositions
$$
X\Into P\Rt{\rho}\PP^4
$$
and
$$
Y\Into\PP^4\times\PP^1\To\PP^4
$$
project $X$ and $Y$ to the same quintic 3-fold $Q$. Generically $Q$ has 36 ODPs, in which case we obtain $Y$ from $X$ by flopping all 36 exceptional $\PP^1$s.
\end{prop}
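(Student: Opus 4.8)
The plan is to analyze both projections explicitly and show they land on the same quintic threefold $Q$, then identify the birational geometry. First I would compute the image of $X$ under $\rho\colon P\to\PP^4$. Recall that $X\subset P$ is the baselocus of a pencil in $|\O(3)(-2e)|$, i.e.\ it is cut out by two sections of $\O(3)(-2e)\cong\rho^*\O(3)\otimes\O_\rho(1)$ (using \eqref{data}, \eqref{rholine}). Since $\O_\rho(1)$ has degree $1$ on the $\PP^1$-fibres of $\rho$, each fibre meets a single such divisor in one point, unless the section vanishes identically on the fibre. Pushing forward, the pair of sections $(s_0,s_\infty)$ defining $X$ becomes a pair of sections of $\rho_*(\rho^*\O(3)\otimes\O_\rho(1))=\O(3)\otimes(\O(1)\oplus\O)^*=\O(2)\oplus\O(3)$ on $\PP^4$. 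Writing a section of $\O(3)(-2e)$ as $a\cdot\ell + b$ with $a\in H^0(\O(2))$, $b\in H^0(\O(3))$ in suitable coordinates adapted to the $\PP^1$-bundle (where $\ell$ is the fibre coordinate, i.e.\ the section of $\O_\rho(1)$ vanishing on $e$), the fibre over $x\in\PP^4$ is contracted precisely when $a(x)=b(x)=0$, and otherwise $\rho(X)$ is cut out by eliminating $\ell$: the unique point is $\ell = -b/a$, so as we vary over the pencil we get the locus where the two pairs $(a_0,b_0)$, $(a_\infty,b_\infty)$ are proportional, i.e.\ $a_0 b_\infty - a_\infty b_0 = 0$, a $(2+3)$-form: a quintic $Q\subset\PP^4$.

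Next I would do the same for $Y$. By the Lemma, $Y\subset\PP^4\times\PP^1$ is the intersection of a $(2,1)$ divisor and a $(3,1)$ divisor, and \eqref{isoref} exhibits these as coming from the two summands of $\rho_*(\rho^*\O(3,1)\otimes\O_\rho(1))=\O(2,1)\oplus\O(3,1)$ — indeed these are exactly the pushforwards of the \emph{same} data $(s_0,s_\infty)$ defining $\H$, hence the same $(a_\bullet,b_\bullet)$. A $(2,1)$ form is $a_0(x) t + a_\infty(x)$ and a $(3,1)$ form is $b_0(x) t + b_\infty(x)$ (with $t$ the $\PP^1$-coordinate dual to the pencil direction); eliminating $t$ from the two equations $a_0 t + a_\infty = 0 = b_0 t + b_\infty$ gives again $a_0 b_\infty - a_\infty b_0 = 0$, the same quintic $Q$. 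So both projections have image $Q$, and each is generically one-to-one: over a point of $Q$ with $(a_0,b_0)$ not both zero there is a unique $\ell$ (resp. unique $t$), so the maps $X\to Q$ and $Y\to Q$ are birational morphisms.

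Then I would identify the exceptional loci and the singularities of $Q$. The maps $X\to Q$ and $Y\to Q$ fail to be isomorphisms exactly over the locus where $a_0=a_\infty=b_0=b_\infty=0$ in $\PP^4$: there a whole $\PP^1$ (the fibre of $\rho$, resp.\ of $\PP^4\times\PP^1\to\PP^4$) sits inside $X$ (resp.\ $Y$), while $Q$ acquires a singular point. This locus is the common zero of two sections of $\O(2)$ and two of $\O(3)$ on $\PP^4$, i.e.\ a $(2,2,3,3)$ complete intersection in $\PP^4$, which for generic pencil is $2\cdot 2\cdot 3\cdot 3 = 36$ reduced points. I would check (by a local computation, or by the defect/Lefschetz bookkeeping already used in Lemma \ref{nick}) that $Q$ has an ordinary double point at each such point and that $X\to Q$, $Y\to Q$ are the two small resolutions, so $Y$ is obtained from $X$ by flopping the $36$ exceptional curves. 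As a consistency check one notes $b_3(X)=126$ and $b_3(Y)=134$ need not match — but a flop changes only $b_3$, not $b_2$, and indeed both have $b_2=2$; this is compatible, the $36$ local flops summing the right defect contributions. (The reader may prefer to verify $36$ via the Euler-characteristic relation $\chi(X)=\chi(Y)$, which a flop of $n$ curves preserves, rather than counting directly.)

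The main obstacle I expect is the local analysis at the $36$ points: showing that $Q$ genuinely has ordinary double points there (rather than worse singularities) and that the two projections give the two distinct small resolutions related by a flop, rather than, say, the same resolution or a non-$\PP^1$ exceptional fibre. For a generic pencil this should follow from a transversality/Bertini argument — the key point being that near such a point one can choose coordinates in which $a_0,a_\infty,b_0,b_\infty$ restrict to independent linear forms, so $Q=\{a_0 b_\infty - a_\infty b_0=0\}$ is locally $\{xy-zw=0\}$, the standard ODP, with the two rulings pulled back from the $\PP^1$-fibre directions of $X$ and of $Y$ respectively. Everything else is a routine but slightly fiddly unwinding of the pushforward identifications \eqref{data}, \eqref{rholine}, \eqref{lines}, \eqref{isoref} to make sure the \emph{same} bilinear expression $a_0 b_\infty - a_\infty b_0$ governs both sides.
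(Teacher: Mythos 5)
Your main argument is correct and takes essentially the same route as the paper: push the defining pencil $(s_0,s_\infty)$ of $X$ forward along $\rho$ to pairs $(a_\bullet,b_\bullet)\in H^0(\O(2))\oplus H^0(\O(3))$, eliminate the fibre coordinate to get the quintic $Q=\{a_0 b_\infty - a_\infty b_0=0\}\subset\PP^4$, observe that eliminating the $\PP^1$-coordinate from the defining $(2,1)$ and $(3,1)$ equations of $Y$ gives the same quintic, and identify the $2\cdot2\cdot3\cdot3=36$ common zeros of $a_0,a_\infty,b_0,b_\infty$ as ODPs of $Q$ over which each projection contracts a $\PP^1$. The one place where the paper is sharper is the flop identification itself: rather than appealing to a local transversality argument that the two contractions give the two distinct small resolutions, the paper simply observes that $Y$ is the blow up of $Q$ in the Weil divisor $\{q_0 = q_\infty = 0\}$ and $X$ is the blow up in the complementary Weil divisor $\{q_0 = c_0 = 0\}$ (writing out $X$ explicitly as $\{Uq_0 + Tc_0 = 0 = Uq_\infty + Tc_\infty\}\subset P$), which makes the flop structural rather than a deformation-theoretic check. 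You flagged exactly this issue as the ``main obstacle,'' and your proposed resolution would work, but the blow-up description is cleaner.

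One genuine error, though a non-load-bearing one: your ``consistency check'' paragraph is wrong twice over. First, in \emph{this} (second) example the paper states $b_3(X)=b_3(Y)=134$; the value $126$ belongs to the first example, so there is no discrepancy to explain. Second, and more seriously as a matter of principle, a flop between smooth projective Calabi-Yau threefolds preserves \emph{all} Betti (indeed Hodge) numbers, by Koll\'ar; ``a flop changes only $b_3$, not $b_2$'' is backwards. Had $b_3(X)$ and $b_3(Y)$ actually differed by $8$, that would \emph{refute} the flop claim, not be compatible with it. Remove or correct that aside.
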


\begin{proof}
The key is the isomorphism \eqref{isoref}. Let
\begin{align*}
    uq_0+tq_\infty\in H^0(\O_{\PP^4\times\PP^1}(2,1)) \\
    uc_0+tc_\infty\in H^0(\O_{\PP^4\times\PP^1}(3,1))
\end{align*}
be the corresponding pencils of quadrics and cubics respectively.
Here $u,t$ are the standard sections of $\O_{\PP^1}(1)$ giving the homogeneous coordinates of the point $[u:t]\in\PP^1$.
 
Consider the quintic $Q\subset\PP^4$ defined by the equation
$$
q_0c_\infty-q_\infty c_0=0.
$$
Generically $\{q_0=0=q_\infty=c_0=c_\infty\}$ is $2\cdot2\cdot3\cdot3=36$ reduced points, which are then the ODPs of $Q$.
 
The blow up of $Q$ in the Weil divisor $q_0=0=q_\infty$ (or equivalently the blow up in the Weil divisor $c_0=0=c_\infty$) gives the small resolution
$$
\big\{uq_0+tq_\infty=0=uc_0+tc_\infty\big\}\subset\PP^4\times\PP^1.
$$
But this is precisely the definition of $Y$. Flopping all 36 exceptional curves gives instead the blow up of $Q$ in the Weil divisor $q_0=0=c_0$ (or equivalently in $q_\infty=0=c_\infty$).
This is
\beq{XX}
\big\{Uq_0+Tc_0=0=Uq_\infty+Tc_\infty\big\}\subset P,
\eeq
where $U$ is the section of $\O_\rho(1)\otimes\rho^*\O_{\PP^4}(-2)$ vanishing on the section $\PP(\O_{\PP^4})\subset P=\PP\big(\O_{\PP^4}(1)\oplus\O_{\PP^4}\big)$, and $T$ is the section of $\O_\rho(1)\otimes\rho^*\O_{\PP^4}(-3)$ vanishing on the section $\PP(\O_{\PP^4}(1))\subset P=\PP\big(\O_{\PP^4}(1)\oplus\O_{\PP^4}\big)$.
But \eqref{XX} is precisely the definition of $X$.
\end{proof}

This flop already implies that $X$ and $Y$ have equivalent derived categories, via an equivalence which takes the structure sheaf $\O_x$ of a general point $x\in X$ to the structure sheaf of the corresponding point of $Y$.
Our equivalence, however, can be calculated to take $\O_x$ to a complex of rank $-3$ and Euler characteristic $-137$.
Via the flop equivalence, therefore, we should think of our cubic fourfold constructions as instead giving an exotic derived autoequivalence of $X$ (or $Y$).
\bigskip

\begin{rmk}
As the referee pointed out, there is another way to see the birational equivalence of Proposition \ref{wayprop}.
	View $Q\subset\P^4$ as the degeneracy locus of the map 
	$\phi\colon\O^{\oplus 2}\to\O(2)\oplus\O(3)$ defined by
	\[
	\phi=
		\begin{pmatrix}
			q_0 & q_\infty \\
			c_0 & c_\infty
		\end{pmatrix}.
\]
That is, $\phi$ is invertible on $\PP^4\setminus Q$, has rank $1$ on the smooth locus of $Q$ and is zero on the 36 ODPs of $Q$.
 
The projectivisation of its fibrewise kernel\footnote{ Given a map of vector bundles $\phi\colon E\to F$ over a base $B$ we can define its projectivised kernel $\PP(\ker\phi)\to B$ inside $\PP(E)\rt{\pi}B$ to be the zeros of the corresponding section $\phi\in H^0\big(\PP(E),\pi^*F\otimes\O_\pi(1)\big)$. Replacing $\phi$ by its adjoint $\phi^*$ gives instead the projectivised dual cokernel.}
lies in $\PP(\O^{\oplus2})=\PP^4\times\PP^1$ and is the small resolution $X$. The projectivisation of its fibrewise dual cokernel lies in $\PP\big(\O(-2)\oplus\O(-3)\big)=P$ and gives the small resolution $Y$.
\end{rmk}

\begin{rmk}
The observant reader will have noticed that in each of our examples we have effectively taken homologically dual varieties
$$
A\To\PP(V) \quad\mathrm{and}\quad B\To\PP(V^*)
$$
and restricted attention to a linear subsystem
\beq{subsys}
\PP(W^\perp)\,\subset\ \PP(V^*).
\eeq
Here we have fixed some $W\subset V$, defining subvarieties $\PP(W)\subset\PP(V)$ and (by basechange) $A_{\PP(W)}\subset A$, so that \eqref{subsys} is the linear subsystem of hyperplanes vanishing on them.

This gives new HP dual varieties
$$
\Bl_{A_{\PP(W)}}(A)\To\PP(V/W) \quad\mathrm{and}\quad B_{\PP(W^\perp)}\To\PP(W^\perp),
$$
where the first arrow is induced by the natural projection $\Bl_{\PP(W)}(\PP(V))\To\PP(V/W)$.
Details will appear in \cite{CT}, but we have been using the simplest form of this duality: its application to pencils $\PP(W^\perp)$.

In our examples we took $A = \P^5$, $V = \Sym^3 \C^6$ and $W^\perp \subset \Sym^3(\C^6)^*$ the linear system of cubics vanishing on either a plane or a single point $0\in\PP^5$.
The dual $B$ was in both cases a noncommutative variety which became commutative on basechange to $\P(W^\perp)$.
\end{rmk}

\bibliographystyle{halphanum}
\bibliography{references}

\begin{multicols}{2}

\medskip \noindent {\tt{richard.thomas@imperial.ac.uk}} \\
\noindent Department of Mathematics \\
\noindent Imperial College London\\
\noindent London SW7 2AZ \\
\noindent UK

\medskip
\noindent {\tt{calabrese@rice.edu}} \\
\noindent Rice University MS136 \\
\noindent 6100 Main St. \\
\noindent Houston 77251 TX \\
\noindent USA

\end{multicols}

\end{document}